\newtheorem{theorem}{Theorem}
\newtheorem{lemma}[theorem]{Lemma}
\newtheorem{observation}[theorem]{Observation}
\numberwithin{claimcounter}{theorem}
\newtheorem{claim}[theorem]{Claim}
\newtheorem*{claim*}{Claim}
\newcommand{\subpattern}{\preceq}
\newcommand{\subpatternr}{\succeq}
\theoremstyle{definition}
\newcommand{\mS}{\bar{S}}
\newcommand{\mR}{\bar{R}}
\author[F1.Molla]{Theodore Molla}
\author[F2.Nelson]{Corey Nelson}
\title{The Basis of Foot-Sortable Sock Orderings}
\thanks{
\\ \indent TM: Department of Mathematics and Statistics, University of South Florida, Tampa, FL. Email: \texttt{molla@usf.edu}.   Research supported in part by NSF grants DMS-1800761 and DMS-2154313.
\\ \indent CN: Department of Mathematics, University of Tennessee, Knoxville, TN. Email: \texttt{cnelso74@vols.utk.edu}. The majority of this research effort was performed while this author was an undergraduate student at the University of South Florida.
}
\begin{document}

\begin{abstract}
  Defant and Kravitz considered the following problem:
  Suppose that, to the right of a foot, there is a line of colored socks that needs to be sorted.
  However, at any point in time, 
  one can only either place the leftmost sock to the right of the foot onto the foot (stack)
  or remove the outermost sock on the foot and make it the rightmost sock to the left of the foot (unstack).
  In this paper, we explicitly describe all minimal initial sock orderings that are unsortable.
\end{abstract}

\maketitle

\section{Introduction}
Consider the following problem, which was introduced by 
Defant and Kravitz \cite{defant2022footsorting}.
Starting with an arbitrary line of colored socks to the right of a foot, 
one aims to move the socks to the left of the foot so that the ordering is \emph{sorted} by color; that is, the output ordering consists of blocks of socks so that all socks of the same color appear consecutively. At any point in time, however, only one of the following two actions can be performed.
\begin{enumerate}[left=40pt]
  \item[(stack)] Take the leftmost sock on the right and place it on the foot (potentially over any socks
    that are already on the foot).
  \item[(unstack)] Remove the outermost sock from the foot and place it so that it becomes the rightmost sock on the left.
\end{enumerate}
The initial ordering of socks on the right is a \textit{sock ordering} and 
a sock ordering is \textit{foot-sortable} (or simply \textit{sortable}) if using only the two specified operations, one can produce
a sorted ordering of the socks on the left. Otherwise, it is \textit{unsortable}. Throughout, we represent sock orderings as strings in the obvious way; left-to-right in the string corresponds
to left-to-right in the sock ordering.
For example, if \(r\), \(g\), and \(b\) refer to socks of color
red, green, and blue, respectively,
the sock ordering \(rgbrgb\) is sortable (to \(rrbbgg\) on the left) while the sock ordering
\(rgbgrgb\) is unsortable.

We will use the following two definitions to describe our result.
Two sock orderings are \textit{equivalent} if
one can be obtained from the other by a bijective mapping of the colors.
So, for example, we think of \(rgbgrgb\) as equivalent to \(gbrbgbr\).
We call a sock ordering \textit{minimally unsortable} if it is unsortable and
removing any sock yields a sortable sock ordering. 
For example, \(rgbgrgb\) is minimally unsortable, because it is unsortable
and the sock orderings
\(gbgrgb\),
\(rbgrgb\),
\(rggrgb\),
\(rgbrgb\),
\(rgbggb\),
\(rgbgrb\), and
\(rgbgrg\)
are each sortable
(to \(gggrbb\), \(rrggbb\), \(rrgggb\), \(rrbbgg\), \(rgggbb\), \(ggbbrr\), and \(bgggrr\), respectively). 

Modulo equivalence, 
Defant and Kravitz 
called the unique set of minimally unsortable sock orderings 
the \textit{basis of foot-sortable sock orderings} 
\cite{defant2022footsorting}.
In this paper, we explicitly described this set.
This description appears in Section~\ref{sec:basis},
and the rest of the paper is devoted to proving that this set is indeed the basis of foot-sortable
sock orderings.
(For reference, \(rgbgrbg\) is equivalent to the ordering \(\mathcal{T}_6\) that is defined
in Section~\ref{sec:basis}.)

\subsection{Prior Work}

As observed by Defant and Kravitz \cite{defant2022footsorting}, this problem fits into the rich line of research
dealing with sorting procedures in which the employed data structures are restricted.
For example, sorting permutations with a stack was considered by Knuth \cite{knuth97}.
Defant and Kravitz also considered several variants of this problem, 
including employing multiple stacks (in particular two stacks or two feet) and restricting the number of socks of a given color.
Of particular interest with respect to socks is the case when there are at most two socks of every color.
Independent of our work, Yu explicitly described the basis for 
foot-sortable orderings in which each color appears
at most twice \cite{yu2023deciding}.
In doing so, Yu determined that this basis is infinite, and this 
implies that the basis of foot-sortable sock orderings is also infinite. 
This answered a question of Defant and Kravitz which our work also addresses.
Also independent of our work, Yu devised a fast deterministic algorithm
that decides if a sock ordering of length \(N\) is sortable in time \(O(N \log N)\).
Provided that the ordering is sortable, this algorithm can also produce a valid sorting.
This problem was also investigated by Xia, who considered a deterministic algorithm that forces the stack to remain sorted \cite{xia2023deterministic}.

\section{Preliminary discussion and a description of the basis.}

\subsection{Additional Definitions and Notation}

Let \(X\) and \(Y\) be sock orderings. 
We say that \(Y\) is a \textit{subordering} of \(X\) and write \(Y \subseteq X\) (or \(X \supseteq Y\)) if 
deleting zero or more socks from \(X\) yields \(Y\) exactly.
The subordering \(Y \subseteq X\) is \textit{proper} if \(X \neq Y\).
So, an unsortable sock ordering is minimally unsortable if every proper subordering is sortable.
We write \(a \in X\) if the color \(a\) appears in \(X\)
and we let \(X - a\) be the subordering of \(X\) obtained by deleting all socks
of color \(a\) from \(X\). We use \(\emptyset\) to denote the empty ordering and
\(a\) to denote the ordering consisting of a single sock of color \(a\).
We let \(XY\) be the concatenation of the orderings \(X\) and \(Y\) and
we write \(X \cong Y\) when \(X\) and \(Y\) are equivalent.
We call \(Y\) a \textit{subpattern} of \(X\) and write \(Y \subpattern X\) (or \(X \subpatternr Y\)) if there exists 
a sock ordering \(Z\) such that \(Z \subseteq X\) and \(Z \cong Y\).
Otherwise, we say that \(X\) \textit{avoids} \(Y\).
Furthermore, \(X\) avoids a collection of sock orderings if it avoids every ordering in the collection.

Throughout, we will refer to the sorting process using primarily stack-centered language. 
We denote a particular state of the foot-sorting process as \((S,R)\). 
Here \(S\) represents the contents of the stack, with left-to-right in \(S\) 
corresponding with bottom-to-top in the stack (or inside-to-outside on the foot), 
while \(R\) represents what remains on the right. 
We say that the state \((S,R)\) is \textit{foot-sortable} (or just \textit{sortable}) if 
we can apply the operations stack and unstack so that all of the socks in \(SR\) 
are arranged by color to the left of the foot.
Note that if the stack \(S\) is not sorted,
then \((S,R)\) is unsortable.
As such, we implicitly assume throughout that the stack \(S\) is sorted.
We call a state \((S', R')\) a \textit{substate} of \((S, R)\) if \(S' \subseteq S\) and \(R' \subseteq R\),
and it is proper if at least one of the two containment relations is strict.

Instead of dealing with the two operations stack and unstack, we work almost exclusively 
at a slightly higher level. At each point, we are mainly concerned with the next
sock color that will be moved completely to the left, and we say
that a color is \textit{sortable} if it is possible to do this while
keeping the stack sorted. Of great importance in the context of moving a color to the left is the concept of a \emph{sandwich}: a configuration of (possibly non-consecutive) socks taking the form \(bcb\), where \(b\) and 
\(c\) are distinct colors. We refer to a sandwich of the form \(bcb\) as either a
\(b\)-sandwich or a \(c\)-sandwich.
More explicitly, given a state \((S,R)\), a color \(a \in SR\) is sortable if the following two
conditions hold.
\begin{enumerate}
  \item For any \(b\in S\) which is distinct from \(a\), \(ab\not\subseteq S\).  (That is,
    if \(a\) appears on the stack, it must be the top color on the stack.)
  \item If \(a\) appears in \(R\), then \(bcba\not\subseteq SR\) for any colors \(b,c\in SR\) such that \(a\), \(b\), and \(c\) are distinct. 
\end{enumerate}
If a color \(a \in R\) does not meet the second condition for colors \(b,c\) in \(SR\), we say that the color \(a\)
is blocked by the sandwich \(bcb\).  
Given a state \((S,R)\), a color \(a \in SR\) is a
\emph{good sortable color} if \(a\) is sortable in \((S,R)\) and what remains after sorting \(a\) 
in \((S,R)\) is foot-sortable.
If no color is sortable in \((S,R)\), then we say that \((S,R)\) is \textit{terminal}. 
Given a sock ordering \(X\), a color \(a \in X\) is sortable (resp.\ unsortable) if it is sortable (resp.\ unsortable) 
in \((\emptyset, X)\), and a sock ordering \(X\) is \textit{trivially unsortable} if no color in \(X\) is sortable.
In other words, \(X\) is trivially unsortable if the state \((\emptyset, X)\) is terminal.  

Note that sorting the color \(a\) transitions the state \((S, R)\) to
the state \((S'R', R'')\) where
\(S' = S - a\) (so \(S' = S\) if \(a\) is not on the stack \(S\)); 
and, when \(a \in R\), \(R'\) is the string which precedes the final \(a\) in \(R\) minus the color \(a\)
and \(R''\) is the string which follows the final \(a\) in \(R\).
When \(a\) does not appear in \(R\), we have \(R' = \emptyset\) and \(R'' = R\).

A sequence of distinct colors \(\theta = a_1, \dotsc, a_\ell\)
is a \textit{sorting sequence} for \( (S, R) \) if there are
orderings \(S_1, \dotsc, S_\ell\) and \(R_1,\dotsc,R_\ell\) such that when 
\(S_0 := S\) and \(R_0 := R\), for every \(i \in [\ell]\), the color 
\(a_{i}\) is sortable with respect to the state \((S_{i-1}, R_{i-1})\) and sorting \(a_i\) in this state
leads to \((S_i, R_i)\). We define \(\theta(S,R) := (S_\ell, R_\ell)\)
and we say that \(\theta\) \textit{fully sorts} \(X\) if 
\(S_\ell = R_\ell = \emptyset\).
For a sock ordering \(X\), we let \(\theta(X) := \theta(\emptyset, X)\).

\subsection{Interlaced sequences}

As mentioned previously, the basis of foot-sortable sock orderings is infinite.
We will use the following definition to help describe the infinite classes.

Let \(T := (t_0, \dotsc, t_n)\) be a sequence of \(n+1\) colors. 
Define  \(I(t_0) := I(T)\) to be the empty ordering when \(n=0\), and when \(n \ge 1\), let
\[
I(T) := I(t_0, \dotsc, t_n) := I(t_0, \dotsc, t_{n-1}) t_{n}t_{n-1} = t_1t_0t_2t_1 \dotsc t_{n}t_{n-1}.
\]
\begin{observation}\label{obs:inf_unsort}
Let \(n \ge 0\) and  \(a, b, t_0, \dotsc, t_n\) be distinct colors.
For every \(0 \le i \le n\), 
if \(abt_i \subseteq S\) and 
\(I(t_i, \dotsc, t_n) at_n \subseteq R\), then 
the state \((S, R)\) is unsortable.
Furthermore, for any \(0 \le i \le n\), 
if \((S', R')\) is a proper substate of 
\( (abt_i, I(t_i, \dotsc, t_n) at_n) \), then \((S', R')\) is sortable.
\end{observation}
\begin{proof}
The proof is by induction on \(n - i\).
For the base case, we have \(i = n\), so 
\[
I(t_i,\dotsc,t_n) a t_n = a t_n
\] 
and it is not hard to see \((abt_n, at_n)\) is terminal and removing any sock from 
\((abt_n, at_n)\) makes the state sortable.
Therefore, both of the statements are true when \(i = n\).

For the induction step, first note that sorting \(t_i\) in 
\[
(abt_i, I(t_{i}, \dotsc, t_n) a t_n) =  
(abt_i, t_{i+1}t_iI(t_{i+1}, \dotsc, t_n) a t_n)
\]
leads to \( (abt_{i+1}, I(t_{i+1}, \dotsc, t_n) a t_n) \).
Note that when \(n - i = 1\) the only sortable color is \(t_i\),
so the first statement follows by induction.
When \(n - i \ge 2\), the only sortable colors are \(t_i\) and \(t_{i+1}\)
and sorting \(t_{i+1}\) in 
\[
(abt_i, I(t_{i}, \dotsc, t_n) a t_n) =  
(abt_i, t_{i+1}t_it_{i+2}t_{i+1}I(t_{i+2}, \dotsc, t_n) a t_n),
\]
leads to \( (abt_{i}t_it_{i+2}, I(t_{i+2}, \dotsc, t_n) a t_n)\).
Therefore, the first statement again follows by the induction hypothesis.

We will now prove the second statement when \(n - i \ge 1\).  
Recall that we are assuming \(S' \subseteq abt_i\) and 
\[
R' \subseteq I(t_i, \dotsc, t_n)at_n = t_{i+1}t_iI(t_{i+1}, \dotsc, t_n)at_n
\]
where at least one of the two, \(S'\) or \(R'\), is a proper subordering.
By possibly adding socks, we
also assume that either \(S' = abt_i\) or
\(R' = t_{i+1}t_iI(t_{i+1}, \dotsc, t_n)at_n \).
First assume that \(R' = t_{i+1}t_iI(t_{i+1}, \dotsc, t_n)at_n\),
so \(S' \neq abt_i\).
If \(b \in S'\) and \(t_i \notin S'\), then we sort \(b\) and then \(t_i\).
Otherwise, we only sort \(t_i\).
In either case, we reach \((S'', R'')\) where \(S''\) is a proper subordering of
\(abt_{t+1}\) and  \(R'' = I(t_{i+1}, \dotsc, t_n)a t_n\).
The conclusion then follows from the induction hypothesis.

Now, assume \(S' = abt_i\). So,
\(R' \neq t_{i+1}t_iI(t_{i+1}, \dotsc, t_n)at_n\).
If \(R'\) does not begin with \(t_{i+1}t_i\), then
we can sort \(t_i\) and then \(b\).
After possibly pushing \(t_{i+1}\) onto the stack, we
arrive at the state \((S'', R'')\) where \(S'' \subseteq at_{i+1}\) and 
\(R'' \subseteq I(t_{i+1}, \dotsc, t_n)at_n\).
In this case, since \(b \notin S''\), the conclusion follows from the induction hypothesis.
Otherwise, we have \(S' = abt_i\) and \(R' = t_{i+1}t_i R''\) where \(R''\) is a proper
subordering of \(I(t_{i+1}, \dotsc, t_n)at_n\).
Then, after sorting the color \(t_i\), we have \((abt_{i+1}, R'')\)
and the conclusion follows by the induction hypothesis.
\end{proof}

\subsection{The basis of foot-sortable sock orderings}\label{sec:basis}

The tables below describe the basis of foot-sortable sock orderings.
We use \(\Gamma\) to denote this set, so our main theorem is the following.
\begin{theorem}\label{thm:main}
If \(X\) is an unsortable sock ordering, then a subpattern of \(X\) is in \(\Gamma\).
\end{theorem}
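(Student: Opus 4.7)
The plan is to reduce Theorem~\ref{thm:main} to a structural classification of \emph{minimally unsortable} sock orderings and then verify that this classification matches the explicit description of \(\Gamma\). First, given an unsortable \(X\), I would choose a subpattern \(Y \subpattern X\) that is itself unsortable but every proper subpattern of which is foot-sortable; such a \(Y\) exists because the subpattern relation is well-founded on the finite poset of subpatterns of \(X\). It then suffices to show that any such minimally unsortable \(Y\) lies in \(\Gamma\).

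The engine of the proof is the Defant--Kravitz criterion stated in the introduction: a sock ordering is foot-sortable if and only if some representative word for it avoids the pattern \(231\). Consequently, \(Y\) is unsortable precisely when every representative word contains a \(231\)-pattern, and it is minimally so exactly when deleting any single sock frees up some representative that avoids \(231\). I would then perform a structural analysis of the configurations with this property, organized by the multiplicities of the repeated colors and the positions at which the forced \(231\)-patterns must sit. The easiest case is when three distinct single-multiplicity colors already form a \(231\) regardless of any resolution choice, recovering the classical Knuth obstruction. The richer cases arise when repeated socks force the \(231\) only through the joint behavior of several resolution choices, and I would handle these by induction on the number of colors, tracking how a forced ordering at one color propagates into a forced \(231\) at another.

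The main obstacle I anticipate is precisely this case analysis: pinning down the full catalogue of interaction patterns between repeated colors, and verifying that every minimal obstruction produced this way coincides on the nose with one of the configurations in the explicit family \(\Gamma\) from Section~\ref{sec:prelim}, with no minimal obstruction overlooked. A secondary, more technical, hurdle is the careful bookkeeping required to check minimality: for each candidate element of \(\Gamma\) one must exhibit, for every sock that might be deleted, a concrete representative word of the resulting smaller ordering that avoids \(231\). Once this combinatorial characterization of the minimally unsortable sock orderings is in hand, Theorem~\ref{thm:main} follows immediately from the reduction in the first paragraph.
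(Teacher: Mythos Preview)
Your reduction to minimally unsortable orderings is fine, but the rest of the proposal has a genuine gap: it never explains how the five \emph{infinite} families \(I_{1,n},\dotsc,I_{5,n}\) in \(\Gamma\) would emerge from a \(231\)-pattern analysis. These families contain arbitrarily many colors, so ``induction on the number of colors'' cannot by itself classify them, and nothing in your outline supplies a mechanism that would generate the interleaved strings \(I(t_0,\dotsc,t_n)\) of Definition~\ref{def:interlace}. In the paper these families arise from a very specific phenomenon (Lemma~\ref{lem:interlinking}): once the greedy algorithm buries a color \(b\) on the stack and \(b\) is never again exposed, the sequence of colors sitting directly above \(b\) forces a copy of \(I(c,t_0,\dotsc,t_n)\) into the remaining input. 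Your \(231\)-based approach has no analogue of this stack-tracking argument, and without one the ``structural analysis'' you describe is just a promise to carry out an unbounded case check.

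There is also a concrete error in your easy case. For sock orderings, three colors each appearing once never form an obstruction: such an ordering is always foot-sortable (assign the numbers \(1,2,3\) in left-to-right order and the representative word is \(123\), which avoids \(231\)). Every element of \(\Gamma\) has a repeated color, so there is no ``classical Knuth obstruction'' to recover here. The paper's proof takes an entirely different route from yours: it runs the deterministic greedy algorithm \(\phi_G\) on \(X\), analyzes the terminal state \((S,R)\), and uses Lemmas~\ref{lem:nodsandwich}, \ref{lem:greedytriv}, \ref{lem:greedy3}, and \ref{lem:greedy2} to show either that a proper subordering of \(X\) is already unsortable (handled by induction on \(|X|\)) or that a specific member of \(\Gamma\) sits inside \(X\). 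The key recursive device is Lemma~\ref{lem:nodsandwich}, which isolates a color \(d\) whose deletion preserves sortability in a controlled way; nothing in your proposal plays this role.
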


With Theorem~\ref{thm:main}, to show that \(\Gamma\) is the basis of foot-sortable sock orderings
we also need to show that every ordering in \(\Gamma\) is minimally unsortable.
\medskip

The following table consists of the trivially unsortable orderings in \(\Gamma\).
\begin{center}
    \begin{tabular}{|c|c|c|c|}
    \hline
        \(\mathcal{T}_1\cong abacaba\) & \(\mathcal{T}_2\cong abacbab\) & \(\mathcal{T}_3\cong abcabca\) & \(\mathcal{T}_4\cong abcacba\) \\
    \hline
        \(\mathcal{T}_5\cong abcabac\) & \(\mathcal{T}_6\cong abcbacb\) & \(\mathcal{T}_7\cong abcbabc\) &  \\
    \hline
        \(\mathcal{T}_8\cong abcadcab\) & \(\mathcal{T}_9\cong abacdcba\) & \(\mathcal{T}_{10}\cong abacdcab\) & \\
    \hline
    \end{tabular}
\end{center}
It is not hard to verify that for each ordering listed above, every color is blocked by sandwich, so
the orderings are indeed trivially unsortable.
It is also not difficult to verify that whenever a sock is removed, 
a color becomes sortable.
This is enough to show that \(\mathcal{T}_1, \dotsc, \mathcal{T}_7\)
are minimally unsortable, because they each contain only three distinct colors, which means that
after a color is sorted in one of the suborderings only at most two colors remain, but  
three distinct colors are necessary for a color to be blocked by a sandwich.
Similarly, 
for \(\mathcal{T}_8\), \(\mathcal{T}_9\), or \(\mathcal{T}_{10}\), 
it can be checked that removing one sock
produces an ordering in which two colors can be successively sorted.
After this, only two distinct colors then remain, so the suborderings are sortable.
\medskip

All of the sock orderings in the next table have either \(atdarat\), \(atdrart\), or \(atdarbrt\) as a subordering.
Each of these three suborderings is sortable if one first sorts the color \(a\).
However, for these three suborderings, 
sorting the color \(d\) 
yields \((at, arat)\), \((at, rart)\), and \((at, arbrt)\), respectively,
and each of these three states are terminal. 
So, to construct the unsortable sock orderings in this table, either a new sock is inserted 
so that a \(d\)-sandwich blocks \(a\),
or \(ed\) is placed immediately after \(d\),
After this addition, the only sortable colors are \(d\) and if \(ed\) was added, \(e\).
This implies that the orderings are unsortable.
\begin{center}
    \begin{tabular}{|c||c||c|}
    \hline
        \(\mathcal{C}_{1,1}\cong datdarat\) & \(\mathcal{C}_{1,6}\cong atdtrart\) & \(\mathcal{C}_{1,10}\cong atdtarbrt\) \\
    \hline
        \(\mathcal{C}_{1,2}\cong atdtarat\) & \(\mathcal{C}_{1,7}\cong datdrart\) & \(\mathcal{C}_{1,11}\cong adtdarbrt\) \\
    \hline
        \(\mathcal{C}_{1,3}\cong atdatrat\) & \(\mathcal{C}_{1,8}\cong adtdrart\) & \(\mathcal{C}_{1,12}\cong datdarbrt\) \\
    \hline
        \(\mathcal{C}_{1,4}\cong adtdarat\) & \(\mathcal{C}_{1,9}\cong atdedrart\) & \(\mathcal{C}_{1,13}\cong atdedarbrt\) \\
    \hline
        \(\mathcal{C}_{1,5}\cong atdedarat\) &  &  \\
    \hline
    \end{tabular}
\end{center}

To see that these orderings are minimally unsortable,
first consider the suborderings of \(\mathcal{C}_{1,3}\cong atdatrat\) formed by removing a single sock:
\[ 
tdatrat, 
adatrat, 
atatrat,
atdtrat,
atdarat,
atdatat,
atdatrt,\text{ and } 
atdatra.
\] 
For \(adatrat\), \(atatrat\), and \(atdarat\), we have removed a sock from the \(d\)-sandwich that blocks
\(a\), so we can fully sort each of these three subordering by first sorting \(a\), and then sorting \(r\).
For the remaining suborderings 
(\(tdatrat\), \(atdtrat\), \(atdatat\), \(atdatrt\), and \(atdatra\)),
we can first sort \(d\) and then one additional color (\(a\), \(r\), \(t\), \(t\), and \(t\), respectively), 
leaving at most two colors, so the suborderings are sortable.
For every other subordering formed by removing one sock from an ordering in the table,
one can verify that one of the following two possibilities hold:
\begin{itemize}
  \item One of the socks in the \(d\)-sandwich that blocks \(a\) has been removed and 
  the subordering can then be fully sorted when one first sorts the color \(a\), or 
  \item sorting \(d\) (and then \(e\) if it appears) yields either \((S, R)\) or \((S, tR)\)
   where \((S, R)\) is a proper substate of one of the states 
   \((at, arat)\), \((at, rart)\), or \((at, arbrt)\), thereby implying 
   that the subordering is sortable.
\end{itemize}
\medskip

The next table consists of the sock orderings that have either 
\(abtdabt\) or \(abtdatb\) as a subordering.  
\begin{center}
    \begin{tabular}{|c||c||c|}
    \hline
        \(\mathcal{C}_{2,1}\cong abtdatab\) & \(\mathcal{C}_{2,4}\cong adbtdatb\) & \(\mathcal{C}_{2,6}\cong abtdedabt\) \\
    \hline
        \(\mathcal{C}_{2,2}\cong abtdtatb\) & \(\mathcal{C}_{2,5}\cong abdtdatb\) & \(\mathcal{C}_{2,7}\cong abtdedatb\) \\
    \hline
        \(\mathcal{C}_{2,3}\cong adbtdabt\) &  &  \\
    \hline
    \end{tabular}
\end{center}
Sorting \(d\) (and possibly \(e\)) first in these orderings 
yields \((abt,R)\) where \(at\subseteq R\). Since \((abt,at)\) is terminal and \(d\) and \(e\) are 
the only sortable colors in these orderings, the orderings in this table are unsortable.

To address minimality, note that each of the following holds for every orderings in the table:
\begin{itemize}
  \item If the final \(t\) is removed, then \(d\) is a good sortable color.
  \item If the first \(a\) or \(b\) is removed, then \(d\) is a good sortable color.
  \item If the final \(a\) or \(b\) is removed, then the color of the removed sock becomes a good sortable color.
  \item If \(d\) or \(e\) is removed, then \(a\) is a good sortable color.
  \item If there is only one sandwich that blocks \(a\), then removing a sock from that sandwich
  makes \(a\) a good sortable color.
\end{itemize}
This covers all possibilities except removing the middle \(a\) when there are three \(a\)'s or
the first \(t\) when it is not part of the only sandwich that blocks \(a\).
This leaves the following suborderings to consider:
\[
abtdtab \subseteq \mathcal{C}_{2,1},\quad
adbdabt \subseteq \mathcal{C}_{2,3},\quad
adbdatb \subseteq \mathcal{C}_{2,4},\quad
abdedabt \subseteq \mathcal{C}_{2,6},\quad\text{and}\quad
abdedatb \subseteq \mathcal{C}_{2,7},
\]
and \(d\) is a good sortable color in each of these three suborderings.
\medskip

For every \(n \ge 0\), let \(a\), \(b\), \(d\), \(e\), and \(t_0, \dotsc, t_n\) be distinct colors.
Let \(T_n := (t_0, \dotsc, t_n)\), \(E_n := I(T_n)at_n\)
and define \(t := t_0\). (Note that \(E_0 = at_0 = at\)).
We define the following five orderings which each contain \(abtdE_n\).
\begin{center}
    \begin{tabular}{|c||c||c||c||c|}
    \hline
    \(I_{1,n} := dabtdE_n\) &
    \(I_{2,n} := adbtdbE_n\) &
    \(I_{3,n} := abdtdbE_n\) &
    \(I_{4,n} := abtdtbE_n\) &
    \(I_{5,n} := abtdedbE_n\) \\\hline
\end{tabular}
\end{center}
By Observation~\ref{obs:inf_unsort}, the state \((S, R)\) is unsortable if 
\(abt \subseteq S\) and \(E_n \subseteq R\). 
Therefore, none of the orderings described below can be fully sorted by first sorting
\(d\) (or \(d\) and \(e\) if \(e\) is present).
Since this is the only option for \(I_{2,n}\), \(I_{3,n}\), \(I_{4,n}\), and \(I_{5,n}\), those
orderings are unsortable.
In \(I_{1,n}\), both the colors \(b\) and \(d\) are sortable, but sorting \(b\) leads to 
\((da, tdE_n)\) which is terminal, so \(I_{1,n}\) is also unsortable.

Let \((S, R)\) be a proper substate of \((abt, E_n)\).
Recall that Observation~\ref{obs:inf_unsort} implies that \((S, R)\) is sortable.
We also claim that \((S, bR)\) is also sortable.
To see this, first note that by adding socks we can assume that we either have \(S = abt\) or \(R = E_n\). 
Therefore, \(t_0 = t \in SR\).  If \(n \ge 1\), then \(t\) is a good sortable color in \((S, bR)\),
because \(t = t_0\) is sortable in \((abt, bE_n)\) and sorting \(t\) in the state \((S, bR)\)
yields a proper substate of \((abbt_1, I(t_1, t_2, \dotsc, t_n)a t_n)\) which is sortable
by  Observation~\ref{obs:inf_unsort}. 
If \(n = 0\), then \((S, R)\) is a proper substate of \((abt, at)\), and it is not hard to
then verify that \((S, bR)\) is then sortable.

The preceding argument implies that both \((S, R)\) and \((S, bR)\) are sortable when 
\((S, R)\) is a proper substate of \((abt, E_n)\).
This immediately implies that if a sock in the initial \(abt\) or the terminal \(E_n\) is removed from 
\(I_{1,n}\), \(I_{2,n}\), \(I_{3,n}\), and \(I_{5,n}\), then
the subordering is sortable.
For \(I_{4,n}\), the same statement also holds. 
To see this, first note that  
we can sort \(d\) and then \(b\) in \(abdtbE_n \subseteq I_{4,n}\) to reach the state \((at, E_n)\).
In all other cases when we delete a sock in the initial \(abt\) or the terminal \(E_n\) from \(I_{4,n}\),
we can sort \(d\) and push \(t\) onto the stack to reach either the state \((att, bE_n)\), the state \((btt, bE_n)\) or a state \((abtt, bR)\) where \(R\) is a proper subordering of \(E_n\).

For \(I_{2,n}\), \(I_{3,n}\), \(I_{4,n}\), and \(I_{5,n}\), the only other possible options
are to remove a sock of color \(d\), \(e\), or the second sock of color \(b\), or,
in \(I_{4,n}\), the second sock of color \(t\).
For the suborderings \(adbtbE_n \subseteq I_{2,n}\) and \(abdtbE_n \subseteq I_{3,n}\),
first sorting \(d\) and then \(b\) leads to the state 
\((at, E_n)\) which is sortable by Observation~\ref{obs:inf_unsort}.
For all other possibilities, if we first sort \(b\) and then \(d\) and \(e\) if they are present, 
we either enter the state  \((at, E_n)\),   
the state \((att, E_n)\), or the state \((at, tE_n)\) and each of these
states are sortable by Observation~\ref{obs:inf_unsort}.

For \(I_{1,n}\), the only other options are to remove one of the two socks of color \(d\).
If the first \(d\) is removed, then we can sort \(b\) followed by \(d\) which leaves
\((at, E_n)\), which is sortable by Observation~\ref{obs:inf_unsort}.
If the second \(d\) is removed, 
then we can again arrive at the state  \((at, E_n)\)
by first sorting \(d\), then sorting \(b\), and then pushing \(t\) onto the stack.

\section{Overview of the proof of Theorem~\ref{thm:main}}

Our proof of Theorem~\ref{thm:main} is inherently algorithmic, 
and we explicitly describe a deterministic recursive algorithm in Appendix~\ref{sec:algorithm}
which is based on the proof. 
Central to our proof is the following notion of a greedy algorithm.

\subsection{The greedy algorithm}\label{sec:greedy}

One's natural instinct may be to iteratively sort the first sortable color appearing in the sock ordering; that is, the sortable color appearing in \((S,R)\) whose final sock appears in \(SR\) before the final sock of any other sortable color in \((S,R)\). We refer to this process as the \emph{greedy algorithm}, and have named the corresponding process in our algorithm as such. 
In some instances, the greedy algorithm may fail.
In effect, it can trap certain socks in the stack which then leads to issues later on in the sorting process. 
For a simple example, consider the sock ordering \(abcdabc\); the greedy algorithm instructs us to sort the color \(d\) first since \(d\) is sortable in \(abcdabc\) and the final sock of color \(d\) appears before the final socks of any other sortable color (which, in this case, is only \(a\)). Sorting \(d\) yields \((abc,abc)\), which is terminal. On the other hand, ignoring the greedy algorithm and instead sorting the color \(a\) first yields \((bcd,bc)\). From this point, one may use the sorting sequence \(\theta=d,c,b\) and observe \(\theta(bcd,bc)=(\emptyset,\emptyset)\), and as such, the sorting sequence \(\theta'=a,d,c,b\) fully sorts \(abcdabc\). 
\medskip

\subsection{Proof overview}

Let \(X\) be unsortable sock ordering.
We prove that \(X\) does not avoid \(\Gamma\) by induction on the number of socks in \(\Gamma\).

We start by applying the greedy algorithm to \(X\). 
As \(X\) is unsortable, the algorithm returns a terminal state \((S, R)\) where at least one of \(S\) or \(R\) is nonempty.

First suppose that the stack \(S\) is empty or contains a color \(x\) such that
\((x, R)\) is terminal.
This implies that either \(R\) or \(xR\) is trivially unsortable. 
Since it is relatively straightforward to show that a trivially unsortable pattern does not avoid \(\Gamma\)
(c.f.\ Lemma~\ref{lem:greedytriv}), we can easily handle this case.
That is, we can show that \(R\) or \(xR\) does not avoid \(\Gamma\), so \(X\) does not avoid \(\Gamma\).

Therefore, we can assume that there are at least two colors on the stack \(S\) and for no color \(x \in S\)
is \((x, R)\) terminal.
Let \(t\) be top color on the stack and \(b\) the color 
that appears immediately below \(t\) on the stack. 
So, \(t \neq b\) and \(bt \subseteq S\).
Since the stack is not empty, the greedy algorithm must have sorted at least one color.
We use \(d\) to denote the last color sorted by the greedy algorithm before termination. 

One simplifying observation is that one of the following two possibilities is true 
(c.f.\ Lemma~\ref{lem:greedy4}):  
\begin{itemize}
    \item \((bt, R)\) is terminal, or
    \item there exists a color \(a\) on the stack, distinct from \(b\) or \(t\), such that \((abt, R)\) is terminal.
      (For technical reasons, in this case, we always assume \(a\) is the lowest such color on the stack.)
\end{itemize}

Suppose \((bt, R)\) is terminal.  From this and the assumption that for every \(x \in S\) 
the state \((x, R)\) is not terminal, we deduce that \(R\) must contain \(P\) where
\(P\) is one of a few possible subpatterns and \(b\) 
is the only good sortable color in the subordering \(btP\).
For example, one possibility is that \(R\) contains \(brbt\) where \(r\) is some new color, and
the only good sortable color in \(btbrbt\) is \(b\) (in fact, \(b\) is the only sortable color in \(btbrbt\)).
If \(b\) is blocked by a \(d\)-sandwich in \(X\),
then we can argue that a subpattern of \(X\) is contained in \(\Gamma\).
Otherwise, we can argue that either \(X - d\) is unsortable 
or that \(I_{1,n}\) is a subpattern of \(X\) for some \(n \ge 0\) (c.f. Lemma~\ref{lem:nodsandwich}).
Since \(I_{1,n} \in \Gamma\), we are done in the second case.
In the first case, the induction hypothesis implies that \(X - d\), and hence \(X\), contains a subpattern that appears in \(\Gamma\).

Now assume the other possibility holds. That is, we have \(abt \subseteq S\) such that \((abt, R)\) is terminal.
This case is somewhat more complicated, but we can sometimes argue in a very similar manner to the previous case. 
However, sometimes the terminal state \((S,R)\) does not give us enough information
about the original ordering \(X\).
For example, it could be that \(S = abt\) and \(R = at\).
When we are in this situation, we backtrack to immediately before the greedy algorithm pushed the
final sock of color \(b\) onto the stack.  We then have sufficient information to complete 
the proof by making an argument similar to the previous cases.

\section{Observations and Lemmas}

In the enumeration of our basis \(\Gamma\), the five infinite classes have a common structure. 
In some sense, the following lemma explains the origin of this commonality.
\begin{lemma}\label{lem:interlinking}
    Let \(X\) be a sock ordering that, for some \(n \ge 0\), has 
    distinct colors \(b\) and \(c\) where
    the final \(b\) precedes the final \(c\) in \(X\). 
    Define
    \(A\) and \(C\) to be the strings such that \(X = AbC\) where \(b \notin C\)\
    and \(c \in C\).
    Let \(\theta\) be a nonempty sorting sequence for \(X\) that sorts a color in \(C\)
    and suppose \(c\) is the color in \(C\) that is sorted first by \(\theta\).
    Taking \((S, R) := \theta(X)\), suppose \(b \in S\)
    and that once \(c\) is sorted, \(b\) never appears as the 
    top color on the stack.
    Also, suppose that \(t_0,\dots,t_n\) (in this order) are the \(n+1\) colors which 
    appear immediately above the color \(b\) in the stack after \(c\) is sorted.
    Then 
    \[
    I(c,t_0, \dotsc, t_n)R \subseteq C. 
    \]
    Furthermore, the colors \(t_0, \dotsc, t_n\) are distinct and disjoint from \(b\) and \(c\).
\end{lemma}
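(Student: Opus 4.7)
The plan is to induct on $n$, leveraging the structural observation that the color immediately above $b$ on the stack can change only as the direct result of a sort move applied to that very color. Indeed, if we sort a color $y$ not currently immediately above $b$, then the $y$'s in the stack sit strictly above the bottom block above $b$, so popping them leaves that block untouched, and the subsequent pushes from $R$ only grow the top; hence the identity of the sock directly above $b$ does not change. So the transitions between the $n+1$ distinct immediate-above-$b$ colors must be effected by precisely the moves sort-$t_0$, sort-$t_1$, \dots, sort-$t_{n-1}$, executed in chronological order. Moreover, during any such move sort-$t_{i-1}$ the hypothesis that $b$ never tops the stack forces at least one non-$t_{i-1}$ sock to be pushed out of $R$ before the final $t_{i-1}$ of $R$ is consumed, and the color of this first non-$t_{i-1}$ sock pushed is by definition $t_i$.

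The base case $n = 0$ goes as follows. Let $\alpha_0$ be the sock immediately above $b$ just after sort-$c$ and let $\gamma$ be the final $c$ in $X$; both lie in $C$. The sock $\alpha_0$ is pushed no later than $\gamma$ is processed: either $\alpha_0$ was already on the stack below the $c$'s popped at the start of sort-$c$, or $\alpha_0$ is the first non-$c$ sock pushed from $R$ during sort-$c$, which necessarily precedes the final $c$ of that $R$. Since $\gamma$ is consumed by the end of sort-$c$, it precedes the final remaining input $R$ in $C$, and so $t_0\, c\, R = \alpha_0\, \gamma\, R \subseteq C$. For the induction step with $n \ge 1$, let $\theta'$ be the prefix of $\theta$ stopping just before the sort-$t_{n-1}$ move and set $(S',R') := \theta'(X)$. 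Then $\theta'$ satisfies the hypotheses of the lemma with immediate-above-$b$ colors $t_0, \dots, t_{n-1}$, so the induction hypothesis yields $I(c, t_0, \dots, t_{n-1})\, R' \subseteq C$. Within the sort-$t_{n-1}$ move applied from $(S', R')$, I would identify $\alpha_n$ as the first non-$t_{n-1}$ sock pushed from $R'$ (of color $t_n$) and $\delta_n$ as the final $t_{n-1}$ sock in $R'$; both lie in the prefix of $R'$ consumed by this move, with $\alpha_n$ preceding $\delta_n$, and the final $R$ is a suffix of $R'$ strictly after $\delta_n$ (later moves only consume more of $R'$). Hence $\alpha_n\, \delta_n\, R \subseteq R'$, and stitching this with the induction hypothesis yields
\[
I(c, t_0, \dots, t_{n-1})\, t_n\, t_{n-1}\, R \;\subseteq\; C,
\]
which is $I(c, t_0, \dots, t_n)\, R \subseteq C$ by the recursive definition of $I$.

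The main obstacle is the structural claim of the first paragraph: pinning down that the immediate-above-$b$ color changes only via a sort move of its current value, and that the never-top-$b$ hypothesis forces a genuinely new color to surface in that position after each such transition. Once this bookkeeping is in place, locating $\alpha_n$ and $\delta_n$ in $R'$ and recognizing them as the trailing $t_n t_{n-1}$ appended at the end of $I(c, t_0, \dots, t_n)$ is essentially mechanical.
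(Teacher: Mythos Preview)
Your approach is the same as the paper's: induct on $n$, recognize that the move transitioning the color immediately above $b$ from $t_{n-1}$ to $t_n$ must be sort-$t_{n-1}$, apply the induction hypothesis to the prefix $\theta'$ of $\theta$ stopping just before that move, and then extract the pair $t_n t_{n-1}$ from the portion of $R'$ consumed by sort-$t_{n-1}$. Your structural bookkeeping in the first paragraph is exactly the content of the paper's line ``Since $t_{n-1}\neq t_n$, this implies that $d = t_{n-1}$.''

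There is one small gap in your base case. You assert that $\alpha_0 \in C$, but your two-case justification only establishes that $\alpha_0$ precedes $\gamma$ in $X$, not that $\alpha_0$ lies to the right of the final $b$. In fact your first case (``$\alpha_0$ was already on the stack below the $c$'s popped at the start of sort-$c$'') cannot occur: because $c$ is the \emph{first} color of $C$ sorted by $\theta$, every earlier move terminates strictly inside $A$, so just before sort-$c$ the remaining input is $A'bC$ and the final $b$ is not yet on the stack. Hence anything sitting immediately above the $b$-block after sort-$c$ was pushed during sort-$c$ and \emph{after} the final $b$, which is precisely what forces $\alpha_0\in C$. Once you add this observation, the base case goes through and the rest of your argument is correct.
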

\begin{proof}\mbox{}
    We will prove the first statement by induction on \(n\). 
    
    For the base case, assume \(n = 0\), so
    \(t_0= t_n\) is the only color that appears directly above \(b\)
    after \(c\) is sorted.
    Since \(c\) follows the last \(b\) and \(b\) can never appear
    as the top color on the stack after \(c\) is sorted, the color \(t_0 = t_n\)
    must appear above \(b\) after \(c\) is sorted.
    Therefore, we have 
    \[
      C \supseteq t_0cR = I(c, t_0, \dotsc, t_n)R.
    \]

    Now, suppose \(n \ge 1\).
    Let \(d\) be the color that is sorted that placed the color \(t_n\) 
    directly above the color \(b\) on the stack.
    And let \(\theta_d\) be the initial sequence of \(\theta\) up to but not including \(d\).
    Let \((S', R') := \theta_d(X)\).
    Note, during the execution of \(\theta_d\), 
    the color \(b\) is never the top color on the 
    stack and that \(t_0, \dotsc, t_{n-1}\) is the sequence of colors
    that appear immediately above
    the color \(b\) after \(c\) is sorted.
    Therefore, by induction, we have 
    \[
    I(c, t_0, \dotsc, t_{n-1}) R' \subseteq C.
    \]
    Recall that \(t_{n-1}\) appears immediately above \(b\) 
    on the stack \(S'\), 
    but after \(d\) is sorted \(t_n\) is the color above \(b\) on the stack.
    Since \(t_{n-1} \neq t_n\), this implies that \(d = t_{n-1}\).
    Furthermore, we also have \(t_n t_{n-1} R \subseteq R'\).
    This completes the proof of the first statement because 
    \[
    C  \supseteq I(c, t_0, \dotsc, t_{n-1}) R' \supseteq
    I(c, t_0, \dotsc, t_{n-1}) t_{n}t_{n-1} R =
    I(c, t_0, \dotsc, t_{n-1},t_n) R.\]

    For the final statement,
    note that, for every \(i \in \{0, \dotsc, n\}\), 
    the color \(t_i \neq b\) by definition and \(t_i \neq c\) since it appears in the 
    stack after \(c\) is sorted.  
    Furthermore, the colors \(t_0, \dotsc, t_n\) must be \(n+1\) distinct colors because
    the only way for a color to removed from the stack is for it to be sorted. 
\end{proof}

We now present a few simple observations and lemmas that will be useful throughout.
\begin{lemma}\label{lem:firsttwo}
Let \(X\) be a sock ordering.  Suppose that \(x \in X\) is unsortable
and that \(x\) is one of the first two colors to appear in \(X\).
Then any color that appears after the final \(x\) in \(X\) is blocked by
an \(x\)-sandwich.
\end{lemma}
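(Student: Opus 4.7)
The plan is to exploit the characterization of unsortability directly in the empty-stack state. Since $x$ is unsortable in $(\emptyset,X)$, condition~(1) of the sortability definition is vacuous, so condition~(2) must fail. Hence there exist distinct colors $b,c$, both different from $x$, with $bcbx \subseteq X$. Fix positions $p_1 < p_2 < p_3 < p_4$ in $X$ realizing this subordering ($p_1,p_3$ carrying color $b$, $p_2$ carrying $c$, and $p_4$ carrying $x$), and let $x_1$ denote the position of the leftmost sock of color $x$ in $X$.

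I would first establish that $x_1 < p_4$, so that $x_1$ and $p_4$ are two distinct sock-$x$ positions. Since $b,c \neq x$, none of $p_1,p_2,p_3$ is a sock of color $x$, so $p_4 \geq 4$. By hypothesis $x_1 \leq 2$, which gives $x_1 < p_4$. Now let $y$ be any color appearing after the final $x$ in $X$; in particular $y \neq x$ and every occurrence of $x$ precedes the position of $y$, so in particular $p_4$ does.

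The key step is to find a color $z \notin \{x,y\}$ occurring at some position strictly between $x_1$ and $p_4$. This is a short case check on whether $x_1 = 1$ or $x_1 = 2$: if $x_1 = 1$, then all three positions $p_1,p_2,p_3$ lie strictly between $x_1$ and $p_4$; if $x_1 = 2$, then $p_1$ could equal $1$, but $p_2$ and $p_3$ still lie strictly between $x_1$ and $p_4$ (they cannot equal $2 = x_1$ because $b,c \neq x$). In either case, at least one $b$-sock and at least one $c$-sock lies strictly between $x_1$ and $p_4$, and since $b \neq c$, at least one of the colors $\{b,c\}$ is different from $y$. Calling this color $z$, the subordering at positions $x_1$, (position of $z$), $p_4$, (position of $y$) reads $xzxy$, with $x,z,y$ distinct and $z \neq x$. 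Thus $xzx$ is an $x$-sandwich blocking $y$, which is the desired conclusion. The only delicate point is the small position-counting argument isolating $z$; the rest is direct bookkeeping from the definitions.
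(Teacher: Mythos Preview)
Your overall strategy matches the paper's: find the blocking sandwich $bcb$ for $x$, locate a sock of color $b$ or $c$ strictly between the first and last $x$, and use that to build an $x$-sandwich before the given color $y$. However, there is a genuine gap in how you invoke the hypothesis.

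The assertion ``by hypothesis $x_1 \le 2$'' misreads the statement. Saying that $x$ is one of the first two \emph{colors} to appear means that at most one distinct colour other than $x$ occurs before the first $x$; it does \emph{not} say the first $x$ sits in position $1$ or $2$. For instance, in $X = aaaxbcbx$ the first two colours are $a$ and $x$, yet $x_1 = 4$. Your case split ``$x_1 = 1$'' versus ``$x_1 = 2$'' therefore does not cover all possibilities, and the inequality $x_1 \le 2 < 4 \le p_4$ is not available.

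The repair is short and is exactly what the paper does. From $p_1 < p_2$ with colours $b \neq c$, both distinct from $x$, observe that if $p_2 < x_1$ then two distinct non-$x$ colours occur before the first $x$, contradicting the hypothesis; hence $p_2 > x_1$ (equality is impossible since $c \neq x$). This gives $x_1 < p_2 < p_3 < p_4$, so both a $c$-sock (at $p_2$) and a $b$-sock (at $p_3$) lie strictly between $x_1$ and $p_4$, and your closing paragraph then goes through unchanged. With this correction the argument is essentially the paper's proof, phrased with explicit positions.
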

\begin{proof}
Let \(yzy\) be a sandwich that blocks \(x\) in \(X\).
Since \(x\) is one of the first two colors to appear, 
\(z\) cannot precede the first occurrence of \(x\).
Therefore, both \(y\) and \(z\) appear between the first and last occurrence of \(x\).
Therefore, any color that follows the final \(x\) in \(X\) is blocked by one of
the sandwiches \(xyx\) or \(xzx\).
\end{proof}

There is a natural ordering of the sandwiches that appear in a given sock ordering.
If \(aba\) and \(xyx\) are suborderings of a sock ordering \(X\) we say that
\(aba\) \textit{precedes} \(xyx\) if the final \(a\) in \(aba\) precedes the final \(x\) in \(xyx\);
or when \(a = x\) and the final socks in \(aba\) and \(xyx\) are the same,
the \(b\) in \(aba\) precedes the \(y\) in \(xyx\);
or when \(a = x\) and \(b = y\) and the last two sock in \(aba\) and \(xyx\) are the same,
the first \(a\) in \(aba\) precedes the first \(x\) in \(xyx\). A sandwich \(aba\) is the \emph{first sandwich} to appear in a sock ordering \(X\) if for all sandwiches \(xyx\) in \(X\) which are distinct from \(aba\), \(aba\) precedes \(xyx\).

\begin{observation}\label{obs:firstsandwich}
If \(aba\) is the first sandwich to appear in a sock ordering \(X\), then 
any color that is not \(a\) or \(b\) is either sortable in \(X\) or is blocked by the
sandwich \(aba\).
\end{observation}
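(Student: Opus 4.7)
The plan is to argue directly by unpacking the definitions. Fix any color $c$ in $X$ with $c \neq a$ and $c \neq b$, and suppose $c$ is not sortable in $X$; the goal is to show $abac \subseteq X$, which (at the state $(\emptyset, X)$) is exactly what it means for the sandwich $aba$ to block $c$.

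The first step is to extract a witness for the unsortability of $c$. Since the stack at $(\emptyset, X)$ is empty, the first clause of the sortability definition is vacuous, so $c$ being unsortable means some sandwich blocks $c$ in $X$. Unpacking this yields distinct colors $y, z$, both distinct from $c$, together with positions $i < j < k < \ell$ in $X$ such that $X_i = X_k = y$, $X_j = z$, and $X_\ell = c$.

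The second step is to exploit the hypothesis that $aba$ is the \emph{first} sandwich to appear in $X$. Fix witnessing positions $i_0 < j_0 < k_0$ with $X_{i_0} = X_{k_0} = a$ and $X_{j_0} = b$. The property I will use is that $k_0$ is minimum among the completion positions of all sandwiches in $X$; this is consistent with the ordering on sandwiches described earlier in the paper (first by the position of the second outer letter, then by the position of the middle letter). In particular $k_0 \le k < \ell$, and therefore the positions $i_0 < j_0 < k_0 < \ell$ realize the pattern $abac$ inside $X$, which is exactly the claim that $aba$ blocks $c$.

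The only delicate point in the argument is justifying this weak inequality $k_0 \le k$, which amounts to confirming that "first sandwich to appear in $X$" is interpreted via earliest completion position. Since the argument uses nothing stronger than $k_0 \le k$, any reasonable reading consistent with the earlier tie-breaking discussion suffices, and no serious obstacle arises.
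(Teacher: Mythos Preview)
Your proof is correct and follows essentially the same approach as the paper's: both arguments observe that an unsortable color must follow some sandwich, and since $aba$ is the first sandwich (by completion position), that color must follow $aba$ as well. Your version is simply a more explicit unpacking of positions, but the underlying idea is identical.
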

\begin{proof}
Let \(x\) be an unsortable color in \(X\) that is neither \(a\) nor \(b\).
Because \(x\) is unsortable, it must follow a sandwich.
Since \(aba\) is the first sandwich to appear, \(x\) must follow \(aba\).
Therefore, \(x\) is blocked by \(aba\).
\end{proof}

\begin{lemma}\label{lem:modsorting}
    Let \((S, R)\) be a sorting state (so \(S\) is sorted) and assume that \(S\) has 
    at least two distinct colors.
    Let \(a \in S\) where \(a\) is not the top color in \(S\) and let \(S'\)
    be formed by removing the color \(a\) from \(S\).
    If \(x\) is unsortable in \((S, R)\) and \(x\) is sortable in \((S', R)\), then 
    \begin{itemize}
        \item \(x = a\), or
        \item \(ax \subseteq R\), or
        \item there exists a color \(y\) distinct from the colors \(a\) and \(x\) such that
        \(yx \subseteq R\) and
        \(x\) is the top color, \(a\) is immediately below the color \(x\),
        and \(y\) is immediately below the color \(a\) on the stack \(S\).
    \end{itemize}
\end{lemma}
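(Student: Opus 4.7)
The plan is to case-split on which of the two conditions for sortability of $x$ fails in $(S,R)$: condition (1), which forbids $x$ from sitting strictly below another color in $S$, or condition (2), which forbids $x$ from being blocked by a sandwich in $SR$. Throughout I use two structural facts — first, that a sandwich-free string $S$ has each of its colors appearing as a single contiguous block, and second, that $S'R$ is a subordering of $SR$, so no new blocker can arise in $(S',R)$; the hypothesis therefore requires every blocker in $(S,R)$ to be killed by the removal of the color $a$ from $S$.

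\textbf{Case A: (1) fails in $(S,R)$.} Then $x \in S$ lies below some other color. For (1) to hold in $(S',R)$, either $x \notin S'$, forcing $x = a$, or $x$ is the top color of $S'$. In the latter case, every block of $S$ strictly above the $x$-block must be an $a$-block; but $a$ occupies exactly one block in $S$, which would force $a$ to be the top of $S$, contradicting the hypothesis. Hence $x = a$, giving the first bullet.

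\textbf{Case B: (1) holds in $(S,R)$ but (2) fails.} Some blocker $bcbx \subseteq SR$ exists with $b, c, x$ distinct and $x \in R$; moreover $x \neq a$, or else (1) would fail (since $a$ is not the top of $S$). Because removing $a$ from $S$ only deletes $a$-socks, every blocker must have $a \in \{b, c\}$ — otherwise all socks of colors $b, c, x$ would survive in $S'R$, preserving the blocker. First suppose some blocker has $b = a$, with pattern $acax$; I enumerate the distributions of its four positions between $S$ and $R$. Three or four positions in $S$ yield $aca \subseteq S$, a sandwich in $S$, which is forbidden; all four positions in $R$ leaves the blocker intact in $S'R$, contradicting the hypothesis. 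In each of the two remaining distributions the second $a$-sock and the $x$-sock both lie in $R$, so $ax \subseteq R$ — the second bullet.

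Otherwise every blocker has $c = a$, with pattern $babx$ and $b \neq a, x$. The analogous enumeration rules out three or four positions in $S$ (sandwich $bab \subseteq S$), while any distribution that places the middle $a$-sock in $R$ leaves the blocker intact in $S'R$; so the only viable distribution is $ba \subseteq S$ and $bx \subseteq R$, and this must hold for each blocker. Given such a distribution, the first $b$ survives in $S'$ and the second is unchanged in $R$, so any color $c' \neq b, x$ appearing strictly between them in $S'R$ would produce a surviving blocker $bc'bx$. Using the block structure of $S$, every block of $S$ strictly above the $b$-block must therefore have color in $\{a, x\}$; since the blocks are pairwise distinct and $a$ is not the top of $S$, this forces exactly two blocks above $b$, namely the $a$-block immediately below the $x$-block at the top. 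Thus $b$, $a$, and $x$ are the third-, second-, and top-most colors of $S$, and setting $y := b$ gives $yx = bx \subseteq R$, the third bullet. The main obstacle throughout the argument is this last step — pinning down the block structure of $S$ from the nonexistence of surviving blockers; the other cases reduce to routine enumerations.
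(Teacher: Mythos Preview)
Your proof is correct and follows essentially the same approach as the paper's: both case-split on whether $a$ is the outer or inner color of a blocking sandwich, and in the inner case use the absence of surviving blockers in $S'R$ to pin down the top three colors of the stack. Your presentation is more explicit (separating failure of condition (1) versus (2), and enumerating how the blocker's socks distribute between $S$ and $R$), whereas the paper streamlines by assuming $x\neq a$ upfront and working with the first blocking sandwich $yzy$, but the underlying logic is the same.
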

\begin{proof}
   Assume \(x \neq a\).
   Because \(x\) is sortable in \((S', R)\), we either have \(x \in R\), or that
   \(x\) is the top color on \(S'\).
   Suppose \(x\) is the top color on \(S'\). 
   Because \(a\) is not the top color on \(S\), this implies that 
   \(x\) is the top color on \(S\).
   Since \(x\) is not sortable in \((S, R)\), we have \(x \in R\) and a sandwich blocks \(x\) in \(SR\).
   So, in all cases, we have \(x \in R\) and a sandwich blocks \(x\) in \(SR\).

   Let \(yzy\) be the first sandwich in \(SR\) that blocks \(x\).
   Since \(S\) is sorted, \(yzy\) is not a subordering of \(S\), so we have \(yx \subseteq R\).
   Because \(x\) is sortable in \(S'R\), the color \(a\) is either \(y\) or \(z\).
   If \(a\) is \(y\), then \(ax \subseteq R\), so assume \(a\) is \(z\).
   Since \(yzy = yay \) does not block \(x\) in \(S'R\), we must have \(ya \subseteq S\).
   Recall that, by assumption, 
   \(a\) is not the top color in \(S\), so let \(w\) be the color that
   appears directly above \(a\) in \(S\).
   Note that \(w \neq y\) because \(S\) is sorted.
   If \(w \neq x\), then \(ywy\) blocks \(x\) in \(S'R\), so we have \(w = x\).
   Since \(x\) is sortable in \((S', R)\) and \(x\) is on \(S'\), 
   the color \(x\) must be the top color on the stack \(S'\).
   Therefore, \(x\) is the top color on \(S\).
   
   We now have that \(x\) is the top color and \(a\) is the color immediately below
   \(x\) on \(S\) 
   and that \(yx \subseteq R\).
   Therefore, we only need to show that the color \(y\) is directly below 
   the color \(a\) in \(S\).
   This follows from the observation that if 
   there exists a color \(u\) that is distinct from \(y\), \(a\), or \(x\)
   that is between \(y\) and \(a\) in \(S\), then the sandwich \(yuy\) blocks
   \(x\) in \(S'R\), a contradiction.
\end{proof}
\medbreak

The following lemma (Lemma~\ref{lem:nodsandwich}) is a crucial piece of our proof.
Before we present its statement and proof, we provide the following 
informal discussion of the statement, the proof, and how it fits into the larger problem.

Let \(X\) be a sock ordering (either sortable or unsortable) 
and suppose that an attempted sorting fails because it trapped
a color on the stack.  That is, there is a color that we need to sort, but we cannot
because it is beneath the top element on the stack.  
Let \(a\) be a color we need to sort and let \(d\) be the last color sorted in our attempted.
The hypothesis of Lemma~\ref{lem:nodsandwich} is how we characterize this situation precisely.

When this happens, it might have been difficult to sort \(a\) before sorting \(d\) because
there is a \(d\)-sandwich that blocks \(a\) in the original ordering \(X\) (c.f.\ \ref{I}). 
In this case, we can often argue that there was essentially no way to avoid trapping \(a\) on the stack,
and that \(X\) is unsortable because it does not avoid 
\(\Gamma\) (c.f.\ Lemmas~\ref{lem:greedy3} and \ref{lem:greedy2}).

If there is no \(d\)-sandwich blocking \(a\) in \(X\), 
then it might be natural to assume that sorting the color \(d\) 
was not the problem with our attempt.  
That is, maybe we sorted the wrong color earlier on in the process.
With this in mind, it might be reasonable to recursively try to 
sort the ordering formed by removing \(d\) from \(X\), \(X - d\). 
If that fails, then the recursive assumption implies 
that \(X - d\) is itself unsortable, which further implies that the original ordering, \(X\), 
is unsortable (c.f.\ \ref{II}).
So, we only need to decide how to proceed when we can sort \(X - d\). 

Let \(\theta\) be any sorting sequence that fully sorts \(X - d\).
One possibility is that if we apply \(\theta\) to the original sequence \(X\), 
then at some point the color \(d\) appears only at the top of the stack (c.f.\ \ref{III}).
Note that this implies that \(X\) itself is sortable; if we sort \(d\) at this point, 
then the state matches the corresponding state reached while sorting \(X - d\).
(For the proof of Theorem~\ref{thm:main}, we could replace \ref{III} with the simpler statement 
``\(X\) is sortable'', but that would not imply that 
when an attempt to sort \(X - d\) succeeds, we necessarily have a sorting sequence that 
can be used to sort \(X\),  so it would not imply that our recursive algorithm is correct.)

So let us assume otherwise.  (That is, we assume that neither \ref{I}, \ref{II}, nor \ref{III} hold.)
This is how we begin the proof of the lemma.  
With this assumption, we can quickly argue that 
\(X = YDW\) where \(D\) contains all of the socks of color \(d\) 
and no sock color besides \(d\) appears in \(D\).
Let \(\theta\) be any sorting sequence that fully sorts \(X - d = YW\).
We then argue that \(\theta\)  can be used to sort \(X\) up to and including the color \(a\).
Furthermore, when the sequence \(\theta\) sorts \(a\), it must push \(d\) onto the stack.  
We then analyze the situation immediately before a sandwich would appear on the stack when sorting 
\(X\) with \(\theta\) (this must happen
because the fact that \ref{III} does not hold implies that \(\theta\) cannot be used to
sort every color in \(X\) except \(d\), since then \(d\) would be the only element on the stack).
Using Lemmas~\ref{lem:interlinking} and \ref{lem:modsorting}, we can then determine that 
\(I_{1,n} \in \Gamma\) is a subpattern of \(X\) where \(n+1\) is the number of distinct colors that appear
immediately above \(d\) on the stack after we sort the color \(a\) (c.f.\ \ref{IV}).

\begin{lemma}\label{lem:nodsandwich}
Let \(X\) be a sock ordering and let \(\overline{\theta}\) be a nonempty sorting sequence for \(X\). Define \((S,R):=\overline{\theta}(\emptyset,X)\) and let \(d\) be the last color in \(\overline{\theta}\). Suppose that there exists a color \(a\) distinct from \(d\) and suborderings \(S'\subseteq S\) and \(R'\subseteq R\) such that \(a\) is not the top color of \(S'\), \(a\) appears in \(R'\), and \(a\) is the only good sortable color in \(S'R'\).
Then, one of the following is true:
\begin{enumerate}[label=(\Roman*)]
\item\label{I} there is a \(d\)-sandwich that blocks \(a\) in \(X\);
\item\label{II} \(X - d\) is unsortable;
\item\label{III} 
  \(X - d\) is sortable and if a sequence \(\theta\) fully sorts \(X - d\), then
  when we attempt to sort \(X\) with the sequence \(\theta\), 
  there is a stage where the color \(d\) only appears as the top color on the stack; or
\item\label{IV}  for some \(n \ge 0\), \(I_{1,n}\) is a subpattern of \(X\).
\end{enumerate}
\end{lemma}
\begin{proof}
    Assume that \ref{I}, \ref{II}, and \ref{III} do not hold. 
    We will show that \ref{IV} must hold.
    Let \(x\) be the top color on \(S'\) and \(D \subseteq X\)
    be such that \(X=YDW\) where the first element of \(D\) is the first \(d\) in \(X\)
    and the final element of \(D\) is the final \(d\) in \(X\).
    Note that the subordering \(R\) must follow the last \(d\) in \(X\),
    so \(R \subseteq W\).
    Similarly, everything in \(S\) must precede the last \(d\) in \(X\), so
    \(S \subseteq YD\).
    Since \(R \subseteq W\) and \(a \in R' \subseteq R\), we have \(a \in W\), so the fact 
    there are no \(d\)-sandwiches 
    that block \(a\) in \(X\) implies that the only colors that could appear in \(D\)
    are \(a\) and \(d\).
    In particular, the color \(x\) does not appear in \(D\), 
    so the fact that \(a\) is beneath \(x\) on \(S'\) and 
    \(S' \subseteq S \subseteq YD\) implies that \(ax \subseteq Y\).
    Note that because \(d\) is sorted by \(\overline{\theta}\) but \(a\) and \(x\) are not, 
    there is no sandwich that consists only
    of \(a\) and \(x\) that blocks \(d\) in \(X\).
    Therefore, the color \(a\) does not appear in \(D\) 
    as otherwise the fact that \(ax \subseteq Y\) would imply
    the existence of a sandwich of the form \(axa\) before the final \(d\) in \(X\).
    As such, \(d\) is the only color that appears in \(D\)
    and we can assume without loss of generality that the color \(d\) only appears once in \(X\),
    so \(D = d\).

    By assumption, the statements \ref{II} and \ref{III} are both false, so
    there exists a sorting sequence \(\theta\) that fully sorts \(X - d = YW\) such
    that when we apply \(\theta\) to \(X = YDW = YdW\) the color \(d\) never appears 
    only as the top color on the stack.
    Let \(\theta_{a}\) be the list of initial colors in \(\theta\) up
    to, but not including, \(a\) (so \(\theta_{a}\) is empty if and only if \(\theta\) begins
    with \(a\)).
    Note that no color in \(S'R'\) could appear before 
    \(a\) in \(\theta\) because \(a\) is the only good sortable color in \(S'R' \subseteq YW\)
    and \(\theta\) fully sorts \(YW\).
    We claim that this implies that no color that appears after \(ax\) in \(YW\) is in \(\theta_a\).
    To see this, note that sorting such a color would put \(ax\) onto
    the stack, but that would make it impossible to sort \(a\) before sorting \(x\).
    In particular, because \(ax \subseteq Y\), this means that 
    no color that appears in \(W\) is in \(\theta_a\).
    Therefore, \(a\) is the color in \(W\) that is
    sorted first by \(\theta\),
    and \(\theta_a\) is a sorting sequence for \(X = YdW\).
    Furthermore,
    if we let \((S_1, R'_1) = \theta_a(YW)\), 
    then \((S_1, R_1) = \theta_a(YdW)\) where
    \(R'_1 = R_1 - d\).
    Therefore, because \(a\) is sortable in \((S_1, R'_1)\) and
    no \(d\)-sandwich blocks \(a\) in \(X\),
    the color \(a\) is also sortable in \((S_1, R_1)\).
    Recall that no color that appears after \(ax\) is in \(\theta_a\) and \(x\) is not in \(\theta_a\). 
    This, with the fact that \(axda \subseteq SdR \subseteq X\), implies that
    \(xda \subseteq R_1\).
    Therefore, sorting \(a\) in the state \((S_1, R_1)\) pushes \(xd\) onto the stack.
    Let \(y\) be the color that immediately precedes \(d\) in \(R_1\) (so we could have \(y = x\)).
    Note that since \(ax \subseteq Y\) and either \(x = y\) or \(xy \subseteq Y\), in all cases,
    we have
    \begin{equation}\label{ay}
      ay \subseteq Y.
    \end{equation}

    Recall that, by the previous arguments, 
    \(d\) is pushed on the stack when we attempt to apply \(\theta\) to \(X\), 
    so \(\theta\) cannot be a sorting sequence for \(X\) as otherwise
    we would have \((d, \emptyset) = \theta(X)\), a contradiction to the fact that
    the color \(d\) never appears as the top color on the stack.
    Therefore, there must exist a color in \(\theta\) that causes the stack to 
    be unsorted when \(\theta\) is applied to \(X\).
    Let \(z\) be the first such color and let 
    \(\theta_z\) be the list of initial colors in \(\theta\) up to but not including \(z\).
    That is, \(\theta_z\) is the longest initial sequence of \(\theta\) 
    that is a sorting sequence for \(X\).
    Let \((S_2, R_2) := \theta_z(X)\).  
    Note that by our previous arguments, \(a\) must appear in \(\theta_z\)
    and \(a\) is the color in \(W\) that is sorted first by \(\theta_z\)
    and \(d \notin W\).
    By Lemma~\ref{lem:interlinking}, we have 
    \begin{equation}\label{W}
      W \supseteq I(a,t_0, \dotsc, t_n)R_2 = t_0aI(t_0, \dotsc, t_n)R_2,
    \end{equation} 
    where \(t_0, \dotsc, t_n\) is the sequence of colors that appear above \(d\) on the stack
    after \(a\) is sorted.

    Note that since \(y\) is immediately below \(d\) on the stack after the sequence
    \(\theta_a\) is applied to \(X\) and \(d\) is never removed from the stack while 
    \(\theta_z\) is applied to \(X\), 
    the color \(y\) is immediately below \(d\) on the stack \(S_2\). 
    Let \(S_2' := S_2 - d\) and note that \((S_2', R_2) = \theta_z(X - d)\).
    By the selection of \(z\), 
    the color \(z\) is unsortable in \((S_2, R_2)\), but \(z\) is sortable in \((S'_2, R_2)\).
    Since \(d \notin S'_2R_2 \subseteq X - d\), we have \(z \neq d\) and \(dz \nsubseteq R_2\),
    so Lemma~\ref{lem:modsorting} implies that
    \(y\) is the top color, \(d\) is immediately below the color \(y\), and \(z\) is
    the color immediately below \(d\) on the stack \(S_2\)
    and that \(yz \subseteq R_2\).
    Note that this means that \(z = t_n\), since \(t_n\) is defined to be the last color that
    appears immediately above \(d\) on the stack when we sort \(X\) with \(\theta_z\).
    So, \(yt_n \subseteq R_2\).
    This, with \eqref{ay} and \eqref{W}, implies that
    \[ 
      X = YdW 
      \supseteq aydt_0aI(t_0, \dotsc, t_n)R_2 \supseteq 
      aydIt_0a(t_0, \dotsc, t_n)yt_n \cong I_{1,n}.
      \qedhere
    \]
\end{proof}

\medskip

The following three lemmas are needed in the proof of our
main theorem to show that a given unsortable sock ordering has a subpattern that is in \( \Gamma \).  Unsurprisingly, these proofs have a number of cases and are somewhat technical, so 
we defer their proofs until Section~\ref{sec:deferedproofs}.
\begin{restatable}{lemma}{greedytriv}\label{lem:greedytriv}
If \(X\) is trivially unsortable, then a subpattern of \(X\) is in \(\Gamma\).
\end{restatable}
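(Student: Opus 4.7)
The plan is to induct on the length of $X$, reducing to the case where $X$ is minimal in the subpattern order among all trivially unsortable sock orderings, and then to show that any such minimal $X$ lies in the finite family $\mathcal{T} \subseteq \Gamma$. If $X$ has a proper subpattern $X'$ that is also trivially unsortable, the induction hypothesis yields some $Y \in \Gamma$ with $Y \subpattern X' \subpattern X$ and we are done, so we may assume $X$ itself is minimal.

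Next I would set up the key structural data. Let $aba$ be the first sandwich in $X$. By Observation~\ref{obs:firstsandwich}, every color of $X$ other than $a$ and $b$ is blocked by $aba$, so $X$ contains $abac$ (with the last $c$ of $X$ at the end) for every such $c$. Since $X$ is trivially unsortable, the colors $a$ and $b$ are themselves blocked, say by sandwiches $xyx$ (with $x,y\neq a$) and $pqp$ (with $p,q\neq b$) respectively; together these two sandwiches involve at most four distinct colors besides $a$ and $b$. Then I would bound the number of colors in $X$: if $X$ contained a color $c \neq a, b$ that is not among $\{x, y, p, q\}$, then $X - c$ would still be trivially unsortable, because $aba$, $xyx$, and $pqp$ all survive and so every remaining color is still blocked, contradicting minimality. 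A further pass, exploiting the observation that a sandwich $xyx$ with $x, y \notin \{a,b\}$ blocking $a$ typically also blocks $b$ whenever the last $b$ follows the closing position of that sandwich (and symmetrically), shows that $X$ must have exactly $3$ or $4$ distinct colors.

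Finally I would carry out a case analysis in the two remaining cases. In the three-color case (colors $\{a, b, c\}$), the sandwich blocking $a$ lies in $\{bcb, cbc\}$ and the sandwich blocking $b$ lies in $\{aca, cac\}$; combined with the possible orderings of the last occurrences $\ell_a$, $\ell_b$, $\ell_c$ (and sub-choices for how each blocking sandwich sits relative to $aba$), I would verify that minimality pins $X$ down to one of $\mathcal{T}_1, \dotsc, \mathcal{T}_7$. The four-color case $\{a, b, c, d\}$ additionally requires both $c$ and $d$ to be essential in the sense above, and an analogous analysis forces $X \in \{\mathcal{T}_8, \mathcal{T}_9, \mathcal{T}_{10}\}$.

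The main obstacle is carrying out this case analysis cleanly. For each combination of sandwich choice and relative ordering of last occurrences, one must not only identify some $\mathcal{T}_i$ inside $X$ but actually verify that $X$ is equivalent to $\mathcal{T}_i$, i.e., argue that any extra sock beyond the ones forced by the blocking requirements could be removed without destroying trivial unsortability (contradicting minimality). The delicate bookkeeping is in tracking exactly which sandwiches block which colors, and in particular whether a single sandwich simultaneously blocks both $a$ and $b$, since this is what collapses the apparent number of cases down to the ten orderings listed.
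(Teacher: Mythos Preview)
Your plan is sound and its target claim---that minimal trivially unsortable orderings are precisely $\mathcal{T}_1,\dotsc,\mathcal{T}_{10}$---is in fact true, so the argument can be completed along the lines you describe. But the paper takes a shorter route that sidesteps the bookkeeping you identify as the main obstacle, and the two simplifications are worth noting.

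First, the paper never tries to show that a minimal $X$ \emph{equals} some $\mathcal{T}_i$; it only exhibits some $\mathcal{T}_i$ as a subpattern of $X$, which is all the lemma asks. This eliminates the need to argue that every extra sock can be stripped away. Second, instead of tracking separate blockers $xyx$ for $a$ and $pqp$ for $b$, the paper lets $z$ be whichever of $a,b$ terminates first in $X$ and takes $xyx$ to be the first sandwich blocking $z$. Since the other of $a,b$ terminates after the final $z$, both $a$ and $b$ automatically follow this single sandwich $xyx$; together with the first sandwich $aba$ (which blocks every other color by Observation~\ref{obs:firstsandwich}), this one blocker already controls everything. The case analysis is then just on whether $x=a$, $x=b$, or $x\notin\{a,b\}$, with a further split in the last case on where the first $x$ sits relative to $aba$. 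In each branch one reads off a $\mathcal{T}_i$ subordering directly from the forced positions; minimality is invoked only once (in one sub-case of $x\notin\{a,b\}$) to remove the color $b$ and recurse.

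Your approach buys a slightly stronger structural statement (a classification of the minimal trivially unsortable orderings), at the cost of a heavier case analysis; the paper's ``first-terminating of $a,b$'' trick trades that structural statement for a much lighter proof of the lemma itself.
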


\begin{restatable}{lemma}{greedythree}\label{lem:greedy3}
    Let \(X\) be a sock ordering and \(n \ge 0\). Suppose that a nonempty sorting sequence produces \((S,R)\) when it is applied
    to \(X\) and let \(d\) be the last color sorted. Let \(a,b,t_0,\dots,t_n\) be distinct colors in \(X\) which are different from \(d\) and define \(t:=t_0\). If \(abt \subseteq S\) and either
\begin{itemize}
\item \(atb \subseteq R\) or \(abt \subseteq R\) and \(a\) is blocked by a \(d\)-sandwich in \(X\), or 
\item \(bI(t_0, \dotsc, t_n)at_n \subseteq R\) and \(b\) is blocked by a \(d\)-sandwich in \(X\), 
\end{itemize}
then \(X\) is unsortable.
   If, in addition, every proper subordering of \(X\) is sortable, then a subpattern of \( X \) is in \(\Gamma\).
\end{restatable}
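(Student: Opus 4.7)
My plan is to prove both conclusions of the lemma by exhibiting a subpattern of \(X\) that lies in \(\Gamma\).  Since members of \(\Gamma\) are unsortable and unsortability is inherited by supersequences (as noted in Section~\ref{sec:prelim}), this immediately gives the first conclusion that \(X\) is unsortable.  For the second conclusion, if every strict subordering of \(X\) is sortable, then every subpattern \(Y \subpattern X\) with \(|Y| < |X|\) is equivalent to a strict subordering of \(X\) and is therefore sortable; hence the subpattern of \(X\) lying in \(\Gamma\) that I produced must satisfy \(|Y| = |X|\), forcing \(Y \cong X\).  Thus \(X\) itself belongs to \(\Gamma\), and \(X \subpattern X\) witnesses the claim.

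For the first bullet, I extract \(abt \subseteq X\) from \(abt \subseteq S\) (these socks lie in the prefix of \(X\) processed by \(\overline{\theta}\)) together with \(R' \in \{atb, abt\}\) from \(R\), remembering that the socks of \(S\) precede those of \(R\) in \(X\).  I then overlay the \(d\)-sandwich \(\sigma\) that blocks the final \(a\) of \(X\); since \(d\) is sorted last by \(\overline{\theta}\), every \(d\) in \(X\) lies in the processed prefix, which constrains where the \(d\)'s of \(\sigma\) sit relative to \(S\) and \(R\).  A short case analysis on the form of \(\sigma\) (whether \(\sigma = dzd\) or \(\sigma = ydy\), and on the identity of the non-\(d\) color) identifies a subpattern of \(X\) equivalent to one of \(\mathcal{C}_{2,1}, \dotsc, \mathcal{C}_{2,7}\); for example, \(\sigma = dbd\) with \(R' = abt\) yields \(adbtdabt \cong \mathcal{C}_{2,3}\), \(\sigma = tdt\) with \(R' = atb\) yields \(abtdtatb \cong \mathcal{C}_{2,2}\), and \(\sigma = ded\) with a fresh color \(e\) and \(R' = atb\) yields \(abtdedatb \cong \mathcal{C}_{2,7}\).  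In degenerate cases where the sandwich's middle color is \(a\) itself (so \(\sigma = dad\) with the inner \(a\) coming from \(S\)'s copy of \(abt\)), the resulting subpattern is instead equivalent to \(I_{1,n}\) for some \(n \ge 0\).  The second bullet is treated analogously: combining \(abt \subseteq S\) with \(bI(T)at_n \subseteq R\) gives \(abtbI(T)at_n \subseteq X\), and overlaying the \(d\)-sandwich that blocks the final \(b\) produces one of \(I_{2,n}, I_{3,n}, I_{4,n}, I_{5,n}\) according to whether the sandwich takes the form \(dbd, dtd, tdt\), or \(ded\), respectively.

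The main obstacle is the bookkeeping needed to verify that the chosen socks -- from \(S\)'s copy of \(abt\), from the \(d\)-sandwich, and from the extracted piece of \(R\) -- can simultaneously be realized as a single subordering of \(X\) in the correct left-to-right order.  In particular, the two outer socks of the sandwich may lie on opposite sides of the boundary between the processed prefix and \(R\) (as happens in \(\mathcal{C}_{2,1}\), whose sandwich \(tdt\) has one \(t\) in the prefix and the other in \(R\)), so one must be careful to pick the right occurrence of each color when extracting the subordering.  Observation~\ref{obs:inf_unsort} is a useful companion tool in the second bullet, since it already certifies \((abt, I(T)at_n)\) as unsortable and so confirms that the desired \(I_{j,n}\) pattern really does lie in \(\Gamma\).
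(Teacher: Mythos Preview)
Your overall strategy---combine $abt \subseteq S$, the piece $P \subseteq R$, and the $d$-sandwich $\sigma$, then case-split on $\sigma$---matches the paper's, but the case analysis as sketched has real gaps. Two of your cases are ill-posed: by definition a sandwich blocking a color $z$ cannot contain $z$, so ``$\sigma = dad$ blocking $a$'' and ``$\sigma = dbd$ blocking $b$'' are impossible. The patterns $I_{1,n}$ and $I_{2,n}$ actually arise from the \emph{position} of $d$ relative to $a,b,t$ in the processed prefix $S'$ (namely the cases $dabt \subseteq S'$ and $adbt \subseteq S'$), not from the identity of the middle letter of $\sigma$. More seriously, in the second bullet the non-$d$ color of $\sigma$ may be $a$ (yielding $I_{1,n}$, which you omit from that list), and when $n \ge 1$ it may be some $t_i$ with $i \ge 1$; this last possibility fits none of the five infinite families directly and has to be eliminated separately---the paper does so by showing it forces $btdt_ibtt_i \cong I_{1,0}$ inside $X$. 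Similarly, when $\sigma = ede$ with the second $e$ landing in $R'$, that $e$ need not precede all of $P$: for $P = atb$ it may fall between the first $a$ and the $t$, producing $atab \subseteq R'$ and hence $\mathcal{C}_{2,1}$, a case your ``form of $\sigma$'' taxonomy does not isolate.

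There is also a logical gap in how you obtain the first conclusion. Pinning down where the blocked color $z$ can sit among the colors of $P$ (needed to control how $\sigma$ overlays $P$) requires knowing that no sandwich built only from colors of $P$ blocks $z$; the paper deduces this (its Claim~\ref{clm:zblock}) from the sortability of $X - d$, which in turn uses the hypothesis that every strict subordering of $X$ is sortable. Your plan is to exhibit the $\Gamma$-subpattern unconditionally and infer unsortability from it, but the case analysis will not close without that hypothesis. The fix is the paper's opening reduction: if some strict subordering of $X$ is unsortable then so is $X$, so one may freely assume all strict suborderings are sortable before running the case analysis, and both conclusions then follow.
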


\begin{restatable}{lemma}{greedytwo}\label{lem:greedy2}
    Let \(X\) be a sock ordering and
    suppose that a nonempty sorting sequence produces \((S,R)\) when it is applied
    to \(X\) where \(d\) is the last color sorted by the sequence.
    Further suppose that the following two conditions hold:
    \begin{itemize}
        \item For every \(x \in S\), the state \( (x, R) \) is not terminal.
        \item There exist distinct colors \(a\) and \(t\) in \(S\) such that
        \( (at, R) \) is terminal. 
   \end{itemize}
   Then there exists \(P \subseteq R\) such that \(a\) appears in \(P\)
   and \(a\) is the only
   good sortable color in \(atP\).
   Furthermore, if
   \(a\) is blocked by a \(d\)-sandwich in \(X\),
   then \(X\) is unsortable.
   If, in addition, every proper subordering of \(X\) is sortable,
   then a subpattern of \( X \) is in \(\Gamma\).
\end{restatable}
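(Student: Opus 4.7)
The plan has three stages corresponding to the three assertions.

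\emph{Existence of $P$.} I would begin by setting $P := R$. Since $(at,R)$ is terminal, Observation~\ref{obs:sortablecolors} implies that every color $c \ne a$ appearing in $atR$ is blocked by a sandwich inside $atR$, so the only candidate for a good-sortable color of the sock ordering $atR$ is $a$ itself. If $a$ is already sortable in $atR$ and sorting it leaves a sortable state, we are done. Otherwise, the hypothesis that $(a,R)$ is not terminal forces any sandwich blocking $a$ in $atR$ to involve the stack's $t$ — it has the form $tyta$ with $y, t, a \in R$. I would then delete an appropriate sock (typically a copy of $t$) from $P$ to destroy this sandwich; Lemma~\ref{lem:modsorting} controls the side-effects of the deletion, certifying that no previously blocked color becomes good-sortable. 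Iterating yields the desired $P$.

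\emph{Unsortability of $X$.} I would apply Lemma~\ref{lem:nodsandwich} with $x := t$, $S' := at$, and $R' := P$, so that $a$ is the unique good-sortable color in $S'R' = atP$. (If $a \notin R$, a small modification of $\overline{\theta}$ in the spirit of rewinding, combined with the non-terminality of $(a,R)$, puts us in the required setting.) Our extra hypothesis that $a$ is blocked by a $d$-sandwich in $X$ corresponds to alternative~\ref{I}. Alternative~\ref{II} already gives $X - d$ unsortable, so $X$ itself is unsortable since subpatterns of sortable orderings are sortable; alternative~\ref{IV} gives $I_{1,n} \subpattern X$, which is minimally unsortable and again forces $X$ unsortable. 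To rule out alternative~\ref{III}, I would use the non-terminality of $(x,R)$ for every $x \in S$ to produce a sorter of $X - d$ whose run on $X$ stalls at a state containing a $d$-sandwich on the stack — a direct contradiction with~\ref{III}. Hence $X$ is unsortable.

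\emph{Subpattern in $\Gamma$.} Suppose finally that every strict subordering of $X$ is sortable. Then $X$ is minimally unsortable by the previous step, so $X \in \Gamma$; what remains is to identify which class. The combinatorial data produced above — the blocking $d$-sandwich $dyd$, the pair $at$ from the stack, and the pruned $P \subseteq R$ — assembles into a distinguished subordering of $X$, and a case split on the structure of $P$ matches this subordering either with one of $\mathcal{C}_{2,1}, \dotsc, \mathcal{C}_{2,7}$ (when $P$ is short) or with a member of $I_{2,n}, \dotsc, I_{5,n}$ (when $P$ carries a nontrivial interlacing $I(T)at_n$). Minimality of $X$ then forces $X$ to equal this subordering up to equivalence, completing the proof.

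The main technical obstacle is the first part: surgically deleting socks from $P$ to make $a$ sortable in $atP$ without turning any other color into a good-sortable one. The careful analysis of which sandwiches can block $a$ in $atR$ (pinned down by the non-terminality of $(a,R)$), combined with repeated use of Lemma~\ref{lem:modsorting} to monitor sortability changes across such deletions, is the most delicate portion of the argument.
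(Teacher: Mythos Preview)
Your plan has substantive gaps at every stage.

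\textbf{Stage~2 is logically broken.} Lemma~\ref{lem:nodsandwich} asserts only that \emph{at least one} of \ref{I}--\ref{IV} holds; its proof explicitly assumes \ref{I}, \ref{II}, \ref{III} all fail and derives \ref{IV}. Since your hypothesis is precisely that \ref{I} holds, the lemma yields nothing further. Even if you ``rule out \ref{III}'' by exhibiting one sorter of $X-d$ that fails on $X$, you are left with ``\ref{I} or \ref{II} or \ref{IV}'', and \ref{I} alone does not give unsortability. The paper never invokes Lemma~\ref{lem:nodsandwich} here; instead it proves unsortability by exhibiting an explicit $\Gamma$-subpattern.

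\textbf{Stage~1 has an unjustified step.} From ``$(a,R)$ is not terminal'' you conclude that any sandwich blocking $a$ in $atR$ must involve $t$. But non-terminality of $(a,R)$ only says \emph{some} color is sortable there, not that $a$ is; a sandwich $xyx\subseteq R$ with $x,y\notin\{a,t\}$ can block $a$ in both $aR$ and $atR$ while leaving, say, $x$ sortable in $(a,R)$. Your iterative pruning also does not address why sorting $a$ in the resulting $atP$ leaves a sortable state (the ``good'' part of good-sortable). The paper bypasses all of this: it first proves $ar,at\subseteq R$ (for $r$ the first non-$a$, non-$t$ color in $R$) and then pins down a \emph{specific} short $P\in\{arat,\,rart,\,arbrt,\,arcbct\}$, for which one checks directly that $a$ is the unique sortable color in $atP$.

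\textbf{Stage~3 targets the wrong families.} You aim at $\mathcal{C}_{2,*}$ and $I_{*,n}$, but those arise in Lemma~\ref{lem:greedy3} (three colors on the stack). Under the two-color hypothesis of Lemma~\ref{lem:greedy2} the patterns that appear are the $\mathcal{C}_{1,*}$ family: the paper's case analysis on where the $d$-sandwich sits relative to $at$ in $S'$, together with the four possible shapes of $P$, lands exactly on $\mathcal{C}_{1,1}$--$\mathcal{C}_{1,13}$.

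In short, the paper's proof is a direct structural case analysis producing explicit $\mathcal{C}_{1,*}$ subpatterns; your proposed route through Lemma~\ref{lem:nodsandwich} does not close, and the pruning construction of $P$ is not yet sound.
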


The following lemma shows that if we reach the state
\((S,R)\) that is terminal and \(S\) has at least three colors, 
then, in a certain sense, we only
need to focus on at most three colors in \(S\).
Furthermore, two of the three colors can be the top two colors on \(S\).
\begin{lemma}\label{lem:greedy4}
    Let \(S\) and \(R\) be sock orderings where \(S\) is sorted and
    at least three distinct colors appear in \(S\).
    Suppose \((S, R)\) is terminal.
    If \(t\) is the top color and \(b\) is the color immediately below \(t\) on \(S\), 
    then there exists a color \(a \in S\) distinct from \(b\) and \(t\)
    such that \((abt, S)\) is terminal.
\end{lemma}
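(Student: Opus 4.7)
The plan is to take $a := s_1$, the color of the leftmost sock of $S$ (write $S = s_1 s_2 \cdots s_k$), and verify directly that $(abt, S)$ is terminal, with the single sandwich $aba$ doing all the blocking work.

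First I would verify $s_1 \notin \{b, t\}$. If $s_1 = t$, then $s_1, s_{k-1}, s_k$ form the sandwich $tbt$ in $S$, contradicting the assumption. If $s_1 = b$, then since $s_{k-1} = b$, the no-sandwich hypothesis forces $s_j = b$ for every $1 \le j \le k-1$ (otherwise some $bxb$ would appear as a subpattern of $S$); together with $s_k = t$ this would leave only two colors in $S$, contradicting $|\mathcal{A}_S| \ge 3$.

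Next I would check that every color appearing in $abtS$ is unsortable in the state $(abt, S)$. The colors $a$ and $b$ are immediately unsortable by condition~(1) of sortability, since $ab \subseteq abt$ and $bt \subseteq abt$ respectively. For the top-of-stack color $t$, I verify that condition~(2) fails: $t = s_k$ belongs to the remaining input, and the sandwich $aba$ at positions $1, 2, 4$ of $abtS$ (first $a$ and $b$ from the stack $abt$, second $a$ equal to $s_1$) is followed by $t$ at position $k+3$, giving $abat \subseteq abtS$. For any other color $c \in \mathcal{A}_S \setminus \{a, b, t\}$, the assumption $c \ne a = s_1$ puts every occurrence of $c$ at position $\ge 2$ in $S$, so the same sandwich $aba$ yields $abac \subseteq abtS$ and blocks $c$; note in particular that $c$ does not appear on the stack $abt$, so condition~(1) is vacuous and only condition~(2) needs to be disproved.

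The only real subtlety is ruling out $s_1 \in \{b, t\}$; this is where the no-sandwich assumption on $S$ and the three-color hypothesis both get used. Once the choice $a = s_1$ is made, every required unsortability witness in $(abt, S)$ reduces to the single sandwich $aba$ anchored at the front of $abtS$, which is available precisely because $a$ is placed at the very front of $S$ and therefore precedes the last occurrence of every other color. Notice that the hypothesis ``$(S, R)$ is terminal'' is not actually needed for the conclusion as stated; it simply describes the setting in which the lemma gets applied in the algorithm.
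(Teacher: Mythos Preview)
Your argument is correct for the statement exactly as printed, but the printed statement contains a typo: the conclusion should read ``$(abt, R)$ is terminal'', not ``$(abt, S)$ is terminal''. This is clear from how the lemma is used both in Algorithm~\ref{alg:full_sort} (Line~\ref{line:abt}) and in the proof of Theorem~\ref{thm:main} in Section~\ref{sec:main_proof}, and it is also what the paper's own proof actually establishes (it works throughout with $(y_1\dotsm y_pbt, R)$). Your closing remark that the hypothesis ``$(S,R)$ is terminal'' is never used is the tell: for the intended statement that hypothesis is essential, since without it there is no reason any state of the form $(abt, R)$ should be terminal at all.

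Your choice $a = s_1$ does not survive the correction. Knowing that $s_1$ sits at the far left of $S$ tells you nothing about sandwiches in $R$, so the single $aba$ witness you build inside $abtS$ has no analogue in $abtR$. The paper's proof instead takes the shortest subsequence $y_1\dotsm y_p$ of the colors below $bt$ in $S$ for which $(y_1\dotsm y_pbt, R)$ is terminal, assumes $p \ge 2$, and uses Lemma~\ref{lem:modsorting} to compare sortable colors in $(y_1Wt, R)$ and $(y_2Wt, R)$ (with $W = y_3\dotsm y_pb$), eventually deriving both $y_1z_1 \subseteq R$ and $y_1z_1 \nsubseteq R$. That contradiction forces $p = 1$, giving the desired single color $a$. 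To repair your write-up you would need to replace the direct construction by an argument of this kind that genuinely engages with $R$.
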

\begin{proof}
    Suppose \((S, R)\) is a counterexample.
    We can assume without loss of generality that no colors are repeated in \(S\).
    So we can  write \(S = x_1x_2 \dotsm x_mbt\) where
    \(x_1, \dotsc, x_m\), \(b\), and \(t\) are distinct colors.
    Let \(p\) be as small as possible such that there exists
    \(y_1, \dotsc, y_p\), a subsequence of \(x_1, \dotsc, x_m\), 
    such that \((y_1y_2 \dotsm y_pbt, R)\) is terminal. 
    Since \((S,R)\) is a counterexample, we have \(p \ge 2\).
    If \(p = 2\), let \(W = b\) and otherwise let \(W = y_3\dotsm y_pb\).
    By the minimality of \(p\), for \(i \in \{1,2\}\), there exists some \(z_i\) that is sortable in 
    \((y_iWt, R)\).

    For \(i \in \{1,2\}\),
    since \(z_i\) is sortable in \((y_iWt, R)\),
    we have \(z_i \notin y_iW\).
    In particular, this implies that \(z_i \neq y_i\) and \(z_i \neq b\).
    Furthermore, since \(z_i\) is sortable in \((y_iWt, R)\), there is no \(y_iby_i\) sandwich blocking \(z_i\) in \(y_iWtR\), 
    so \(y_iz_i \nsubseteq R\).

    We will now show that \(y_1\) is not sortable in \((y_2Wt, R)\).
    Recall that \((y_1y_2Wt, R) = (y_1y_2 \dotsm y_pbt, R)\) is terminal,
    so \(z_1\) is not sortable in \((y_1y_2Wt, R)\).
    Therefore, 
    we can apply Lemma~\ref{lem:modsorting} to deduce that \(z_1\) is unsortable in \((y_2Wt, R)\),
    because \(z_1 \neq y_1\), \(y_1z_1 \nsubseteq R\), and \(y_2\) is not the second color on the
    stack \(y_1y_2Wt\).
    Since \(z_1\) is unsortable in \((y_2Wt, R)\), \(y_1z_1 \nsubseteq R\), and \(z_1 \notin W\), 
    either \(z_1 = y_2\) or 
    there is a sandwich that blocks \(z_1\) in \(y_2WtR\) in which \(y_1\) does not appear.
    If \(z_1 = y_2\), then, because \(z_1 = y_2\) is sortable in \((y_1Wt, R)\), 
    we have \(y_2 \in R\), so 
    \(y_1y_2 = y_1z_1 \nsubseteq R\) implies that either \(y_1 \notin R\) or
    that \(y_1\) is blocked by a sandwich of the form \(y_2by_2\) in \(y_2WtR\).
    So, \(y_1\) is not sortable in \((y_2Wt, R)\).
    If there is a sandwich that blocks \(z_1\) in \(y_2WtR\) in which \(y_1\) does not appear,
    then \(y_1z_1 \nsubseteq R\) implies that \(y_1\) is not sortable in \((y_2Wt, R)\).
    So, in all cases, \(y_1\) is not sortable in \((y_2Wt, R)\).

    Since \(y_1\) is not sortable in \((y_2Wt, R)\), we have \(y_1 \neq z_2\).
    This, with the fact that \(z_2\) is not sortable in \((y_1y_2Wt, R)\) but 
    is sortable \((y_2Wt, R)\) together with Lemma~\ref{lem:modsorting},
    yields \(y_1z_2 \subseteq R\).
    Since \(y_1z_2 \subseteq R\) and \(y_2z_2 \nsubseteq R\), 
    \(y_2\) is not sortable in \((y_1Wt, R)\), so \(y_2 \neq z_1\).  
    Using this fact with Lemma~\ref{lem:modsorting}, allows us to conclude that 
    \(y_2z_1 \subseteq R\) because 
    \(z_1\) is not sortable in \((y_1y_2Wt, R)\) but is sortable \((y_1Wt, R)\). 
    Since \(y_1z_2 \subseteq R\), \(y_2z_2 \nsubseteq R\), and \(y_2z_1 \subseteq R\), we have \(y_1z_1 \subseteq R\).
    But we have already established that \(y_1z_1 \nsubseteq R\).
\end{proof}

Our last lemma is simple to prove despite its somewhat technical statement.
Informally speaking, we use the lemma in the following situation.
Suppose there are two distinct colors, say \(x\) and \(y\), and the final \(x\)
precedes the final \(y\).
We would expect the greedy algorithm to sort \(x\) before it sorts \(y\), but sometimes
this might not occur.
One reason \(y\) might be sorted before \(x\) is that, when \(y\) is sorted, \(x\) is blocked by a sandwich.
Note that, when this occurs, the sandwich blocking \(x\) must be a \(y\)-sandwich
because the greedy algorithm sorts \(y\) and the final \(y\) appears after the final \(x\).
The other possibility is that \(x\) cannot be sorted because it already appears
below the top color on the stack.
This lemma is used when we are in this second case.
\begin{lemma}\label{lem:secondinstack}
    Let \(S\) and \(R\) be sock orderings where \(S\) is sorted 
    and let \(x\) and \(y\) be distinct colors in \(R\) such that
    the final \(x\) in \(R\) appears before the 
    final \(y\) in \(R\). 
    Suppose that, with respect to \((S,R)\), 
    the color \(y\) is sortable and the color \(x\) is unsortable.
    If no \(y\)-sandwich blocks \(x\) in \(SR\),
    then \(y\) is the top color and \(x\) is the color immediately below \(y\) on \(S\).
    Furthermore, no color except possibly \(y\) precedes \(x\) in \(R\).
\end{lemma}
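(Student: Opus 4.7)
The plan is to pin down why $x$ is unsortable by first ruling out the ``blocked by a sandwich'' possibility, and then to use the sortability of $y$ together with the hypothesis that the final $x$ precedes the final $y$ in $R$ to force the structure of $S$ and of the prefix of $R$ preceding the first $x$.

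First, I would show that no sandwich in $SR$ blocks $x$. Any such blocking sandwich $bcb$ has $b, c \neq x$ and $b \neq c$, and by hypothesis $y \notin \{b, c\}$, so in particular $b, c \neq y$. Since $bcbx \subseteq SR$ and the final $y$ appears after the final $x$, the extension $bcby \subseteq SR$ is a sandwich that blocks $y$, contradicting that $y$ is sortable. Hence the only remaining reason $x$ can be unsortable is via condition~(1) of the definition of sortable: $xb \subseteq S$ for some $b \neq x$. Because $S$ is sandwich-free, its distinct colors form contiguous blocks, so I may write $S = c_1^{n_1} \cdots c_k^{n_k}$ with the $c_i$ distinct and $x = c_j$ for some $j < k$.

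Next, I would apply the same ``extend to the final $y$'' trick to sandwiches realized across $S$ and $R$ to pin down the top of $S$. For each $l$ with $j < l \le k$, the sandwich $x c_l x$ built from the last $x$ in the $S$-block, the $c_l$-block above it, and any occurrence of $x$ in $R$ extends to $x c_l x y \subseteq SR$, which blocks $y$ unless $c_l = y$. Combined with distinctness of the $c_i$, this forces $k = j+1$ and $c_{j+1} = y$, so $x$ and $y$ are the top two colors on $S$. For the final conclusion I argue by contradiction: if some $w \neq x, y$ has $wx \subseteq R$, then $xwx \subseteq SR$ (an $x$ in $S$ together with the $w, x$ in $R$) extends to $xwxy$ via the final $y$, once again producing a sandwich that blocks $y$.

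The one delicate point throughout is the definitional requirement that a sandwich $bcb$ counts as blocking $y$ only when both $b$ and $c$ are distinct from $y$; this technicality is precisely what prevents $xyx$ from being a forbidden obstruction and hence allows---indeed forces---the color immediately above $x$ on the stack to be $y$ itself, which is the key structural fact driving the whole argument.
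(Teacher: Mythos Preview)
Your proof is correct and follows essentially the same line as the paper's: rule out the sandwich-blocking reason for $x$ being unsortable (any non-$y$-sandwich preceding the final $x$ would also block $y$), deduce that $x$ lies below the top of $S$, and then use the sortability of $y$ (equivalently, no $x$-sandwich may block $y$) to force both that $y$ is the unique color above $x$ in $S$ and that nothing but $y$ can precede $x$ in $R$. The only difference is organizational: the paper states once that every sandwich appearing before the final $y$ must be a $y$-sandwich and reads off all three conclusions, whereas you re-derive this obstruction separately for each of the three candidate sandwiches $bcb$, $xc_lx$, and $xwx$.
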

\begin{proof}
   Because \(y\) is sortable in \(SR\), 
   every sandwich in \(SR\) that appears before the final \(y\) in \(R\) must
   be a \(y\)-sandwich.
   Therefore, 
   because the final \(x\) in \(R\) precedes the final \(y\) in \(R\)
   and \(x\) is not blocked by a \(y\)-sandwich in \(SR\),
   the color \(x\) is not blocked by a sandwich in \(SR\).
   The fact that \(x\) is unsortable then implies that \(x\) 
   is below the top color on the stack \(S\).
   Since \(y\) is sortable in \(SR\), 
   there is no \(x\)-sandwich that blocks \(y\) in \(SR\).
   Therefore, \(y\) is the top color and \(x\) is the color immediately below \(y\) on \(S\).
   Furthermore, no color except possibly \(y\) can precede \(x\) in \(R\). 
\end{proof}

\section{Proof of Theorem~\ref{thm:main}}\label{sec:main_proof}

The proof is by induction on the number of socks in \(X\). 
The base case is vacuously true. 
So assume that \(X\) is an unsortable sock ordering.
With the induction hypothesis, 
we can assume that there does not exist a proper subordering of \(X\) that is unsortable.
Assume for a contradiction that \(X\) avoids \(\Gamma\).

\begin{claim}\label{clm:dsandwich}
  Let \(\theta\) be a nonempty sorting sequence for \(X\), let \((S, R) := \theta(X)\),
  and let \(d\) be the last color in \(\theta\).
  If there are distinct colors \(a\) and \(d\) and substrings \(S'\subseteq S \subseteq X\)  
  and \(R' \subseteq R \subseteq X\) such that 
\begin{itemize}
  \item \(a\) is not the top color of \(S'\),
  \item \(a \in R'\), and
  \item \(a\) is the only good sortable color in \(S'R'\),
\end{itemize}
then \(X\) contains a \(d\)-sandwich that blocks \(a\). 
\end{claim}
\begin{proof}
  This claim is simply a restatement of Lemma~\ref{lem:nodsandwich}.
  To see this, first note that the hypothesis of the claim is essentially 
  the same as the hypothesis of Lemma~\ref{lem:nodsandwich}.
  Furthermore, Lemma~\ref{lem:nodsandwich}\ref{II} cannot hold because we have
  assumed that every proper subordering is sortable;
  Lemma~\ref{lem:nodsandwich}\ref{III} cannot hold because \(X\) is unsortable;
  and Lemma~\ref{lem:nodsandwich}\ref{IV} cannot hold because we are assuming 
  that \(X\) avoids \(\Gamma\).
  Therefore, Lemma~\ref{lem:nodsandwich}\ref{I} must hold. 
  That is, there is a \(d\)-sandwich that blocks \(a\) in \(X\). 
\end{proof}

Let \((S, R)\) be the output of the greedy algorithm (as described in Section~\ref{sec:greedy})
applied to \(X\).
This implies that \((S, R)\) is terminal.
Since \(X\) is unsortable, we have \(R \neq \emptyset\).

\medskip

\noindent \textbf{Case 1:  \(S = \emptyset\) or there exists \(x \in S\) such that \((x, R)\) is terminal.}
If \(S = \emptyset\), then \(R\) is trivially unsortable. 
Similarly, if there exists \(x \in S\) such that \((x, R)\) is terminal then \(xR\) is trivially unsortable.
In either case, Lemma~\ref{lem:greedytriv} implies that a subpattern of \(X\) is in \(\Gamma\).
\medskip

From now on, we will assume that we are not in Case 1.  
That is, for every \(x \in S\), some color in \(xR\) is sortable.
Since \((S, R)\) is terminal, this implies that 
there are at least two colors on \(S\).
Define \(t\) to be the top color in \(S\), and let \(b\) be the color in \(S\) appearing immediately under \(t\).
We can also now let \(d\) be the last color sorted by the greedy algorithm.

\medskip
 
\noindent \textbf{Case 2: \( (bt, R)\) is terminal.}
By Lemma~\ref{lem:greedy2}, 
there exists \(P \subseteq R\) such
that \(b\) appears in \(P\) and 
such that \(b\) is the only good sortable color in \(btP\).
This means that the hypothesis of Claim~\ref{clm:dsandwich} is satisfied with
\(b\), \(d\), \(bt\), and \(P\) playing the roles
of
\(a\), \(d\), \(S'\), and \(R'\), respectively.
Therefore, \(b\) is blocked by a \(d\)-sandwich in \(X\).
Note that this is the hypothesis of the second statement in Lemma~\ref{lem:greedy2}.
The final statement of Lemma~\ref{lem:greedy2} then implies that a subpattern of \(X\) is in \(\Gamma\),
a contradiction.

\medskip

\noindent \textbf{Case 3: \( (bt, R)\) is not terminal.}
With Lemma~\ref{lem:greedy4}, we can define \(a\) to be the lowest color on the stack \(S\)
such that \((abt, R)\) is terminal.
\begin{claim}\label{clm:asinR}
    \(at\subseteq R\).
\end{claim}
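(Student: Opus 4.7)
The plan is to prove $at \subseteq R$ by establishing three facts in order: (a) $t \in R$, (b) $a \in R$, and (c) some occurrence of $a$ in $R$ precedes some occurrence of $t$ in $R$. Part~(a) is immediate: $t$ is on top of $S$ and $(S,R)$ is terminal, so condition~(1) for sortability is met and condition~(2) must fail, which forces $t \in R$. For part~(b), since $(bt,R)$ is not terminal, some color $z$ is sortable in $(bt,R)$ and (because $(abt,R)$ is terminal) unsortable in $(abt,R)$. Apply Lemma~\ref{lem:modsorting} with $S = abt$, $S' = bt$, and removed color $a$ (valid because $abt$ is sandwich-free and $a$ is not the top). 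The third conclusion would require $a$ to sit directly beneath the top $t$ on the stack, a slot occupied by $b \ne a$, so it is ruled out. The remaining conclusions give $z = a$ or $az \subseteq R$, and in both cases $a \in R$.

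For part~(c), since $(abt,R)$ is terminal and $t$ is on top, some sandwich blocks $t$ in $abtR$. If one such sandwich uses the stack $a$ at position~$1$ of $abtR$, then as $a$ appears only once in $abt$ the sandwich must take the form $aVa$ with the first $a$ at position~$1$ and the second $a$ in $R$; the blocked $t$ then sits in $R$ strictly after the second $a$, since the only other $t$ lies at the stack position~$3 < 4 \le$ second-$a$ position. This directly yields $at \subseteq R$. Otherwise, every blocking sandwich lies entirely inside $btR$, so $t$ is blocked in $btR$. In this case I would pick a sandwich $uVu$ blocking $t$ in $btR$ with $u, V \ne z$: the non-blocking of $z$ places every $z$ in $R$ strictly before the sandwich's completion, which lies strictly before the blocked $t \in R$; combining with $z = a$ or $az \subseteq R$ locates some $a$ in $R$ before some $t$ in $R$, giving $at \subseteq R$.

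The main obstacle is showing, in the second sub-case, that such a sandwich with $u, V \ne z$ can always be chosen. I would argue by contradiction: suppose every sandwich blocking $t$ in $btR$ has $u = z$ or $V = z$, and also suppose $at \not\subseteq R$. The latter forces every $a$ in $R$ after every $t$ in $R$; then $z = a$ or $az \subseteq R$ produces a $z$ in $R$ strictly after the last $t$. The sortability of $z$ now forbids any sandwich $tct$ formed from the stack $t$ (at position~$2$ of $btR$), an intermediate $c \ne t,z$ in $R$ strictly before the last $t$, and the last $t$ in $R$, because such a sandwich would block the $z$ lying beyond the last $t$. Hence every position of $R$ strictly before the last $t$ must carry only $t$ or $z$. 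However, any sandwich $zVz$ or $uzu$ blocking $t$ has its middle character in that very region yet distinct from both $t$ and $z$ (using also $z \ne b$, which holds because $z$ is sortable in $(bt,R)$ while $b$ is not the top of $bt$), a contradiction. This closes part~(c) and the claim.
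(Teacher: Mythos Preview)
Your final step contains an error. You claim that ``any sandwich $zVz$ or $uzu$ blocking $t$ has its middle character in that very region yet distinct from both $t$ and $z$'', but for a sandwich of the form $uzu$ the middle character is $z$ itself, so it is certainly not distinct from $z$ and no contradiction follows. The repair is to look at the outer letter $u$ instead: since $z \ne b,t$, the middle $z$ already sits at a position $\ge 3$ in $btR$, so the second $u$ sits at a position $\ge 4$, hence in $R$ and strictly before the blocked $t$; and $u \ne t,z$ by the definition of a sandwich blocking $t$. That supplies the forbidden color in the region and restores the contradiction.

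Even with this fix, your argument is considerably longer than the paper's. The paper assumes $at \not\subseteq R$ for contradiction and applies Lemma~\ref{lem:modsorting} directly to the color $t$ (stack $abt$, removing $a$): the three alternatives become $t=a$, $at \subseteq R$, or $a$ sitting immediately beneath the top of $abt$, all impossible, so $t$ is already unsortable in $(bt,R)$. Now any sortable $y$ in $(bt,R)$ satisfies $y \ne b,t$, and either $y=a$ or (by Lemma~\ref{lem:modsorting} again) $ay \subseteq R$; combined with $at \not\subseteq R$ this forces $y$ to lie after the final $t$ in $btR$, and Lemma~\ref{lem:firsttwo} then says $y$ is blocked by a $t$-sandwich, contradicting its sortability. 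Your third paragraph essentially reproves Lemma~\ref{lem:firsttwo} by hand in this special case, and the preliminary split on whether the blocking sandwich uses the stack-$a$ is not needed.
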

\begin{proof}
    Suppose, by way of contradiction, that \(at\nsubseteq R\).
    Then, because \(t\) is unsortable in \((abt, R)\) and \(t \neq a\) and \(a\) is not the second color on the stack \(abt\), Lemma~\ref{lem:modsorting} implies
    that \(t\) is unsortable in \((bt, R)\).
    By the case, \((bt, R)\) is not terminal, so since both \(b\) and \(t\) are not sortable in 
    \((bt, R)\), there must exist a color \(y \in R\) that is not \(b\) or \(t\) which is sortable in \((bt, R)\).
    If \(y = a\), then, because \(at \nsubseteq R\), the color \(y = a\)
    must follow the last occurrence of \(t\) in \(btR\).
    If \(y \neq a\), then, 
    since \(y\) is sortable in \((bt, R)\) and unsortable in \((abt, R)\), 
    Lemma~\ref{lem:modsorting} implies that \(ay \subseteq R\).
    Because \(at \nsubseteq R\), this implies that \(y\) follows the last occurrence of
    \(t\) in \(btR\).
    So, in both cases, \(y\) follows the last occurrence of \(t\) in \(btR\).
    This with Lemma~\ref{lem:firsttwo} and the fact that \(t\) is the second color
    in \(btR\) and \(t\) is unsortable in \(btR\) 
    give us that \(y\) is unsortable in \(btR\), a contradiction.
\end{proof}

\medskip

\noindent \textbf{Case 3.1: There exists a color \(z \in S\) distinct from \(a\) and \(t\) 
such that  \(azt \subseteq S\) and \(z \in R\).}
By Claim~\ref{clm:asinR}, we can assume that either 
\(atz\), \(azt\), or \(zat = zI(t)at\) is in \(R\).
First suppose that either \(atz\) or \(azt\) is \(R\).
Because \(a\) is the only good sortable color in both \(aztatz\)
and \(aztazt\), 
Claim~\ref{clm:dsandwich} implies that \(a\) is blocked by a \(d\)-sandwich.
Since every proper subordering of \(X\) is unsortable,
the last statement of Lemma~\ref{lem:greedy3} implies that \(X\) does not avoid \(\Gamma\),
a contradiction.

The other case is when \(zat = zI(t)at\) is in \(R\) and the argument is similar.
Since \(z\) is the only good sortable color in \(aztzat\),
Claim~\ref{clm:dsandwich} implies that \(z\) is blocked by a \(d\)-sandwich.
So, like the previous case, Lemma~\ref{lem:greedy3} implies that \(X\) does not avoid \(\Gamma\),
a contradiction.
\medskip

\noindent \textbf{Case 3.2: For every \(z \in S\) distinct from \(a\) and \(t\) 
such that  \(azt \subseteq S\), we have \(z \notin R\).}
Recall that \(abt \subseteq S\).
This and the case imply that \(b \notin R\).
Let \(A\) and \(C\) be such that \(X = AbC\) and \(b \notin C\).  Because
\(b \in S\) and \(b \notin R\), there exists a color in \(C\) that is sorted by the greedy algorithm.
Let \(c \in C\) be the color in \(C\) that is sorted first by the greedy algorithm 
and let \((S', A'bC)\) be the state immediately before \(c\) is sorted.
Note that we have 
\begin{equation}\label{eq:S'A'subsetA}
S'A' \subseteq A.
\end{equation}
Recall that the color \(b\) is never sorted by the greedy algorithm and 
note that, after \(c\) is sorted, the color \(b\) only appears on the stack.
Therefore, because \(ab \subseteq S\), we have
\begin{equation}\label{eq:aonstack}
a \in S'A'
\end{equation}
Furthermore, since \(b\) is not sorted by the greedy algorithm, 
the color \(b\) can never appear as the top color on the stack after \(c\) is sorted 
by the greedy algorithm.
For \(n \ge 0\), let \(t_0, \dotsc, t_n\) (in that order from bottom-to-top)
be the \(n + 1\) distinct colors that appear immediately above \(b\) after \(c\) 
is sorted by the greedy algorithm.
Note that then \(t = t_n\), so Claim~\ref{clm:asinR} implies that \(at_n = at \subseteq R\).
With Lemma~\ref{lem:interlinking}, we have 
\begin{equation}\label{eq:Cstructure}
C \supseteq I(c,t_0, \dotsc, t_n)R \supseteq  I(c, t_0, \dotsc, t_n)at_n.
\end{equation}

Note that if \(ca \subseteq A\), then, with \eqref{eq:Cstructure}, we have
\[
   X = AbC \supseteq cabI(c, t_0, \dotsc, t_n)at_n = cabt_0c I(t_0, \dotsc, t_n)at_n \cong I_{1,n}. 
\]
So, for the remainder of the proof, we will assume 
\begin{equation}\label{eq:canotinA}
ca \nsubseteq A.
\end{equation}

\begin{claim}\label{clm:azc}
  Suppose that there is a color \(z \in A'bC\) distinct from \(a\) and \(c\) 
  such that \(z\) is not blocked by a \(c\)-sandwich
  and the final \(z\) appears before the final \(c\) in \(A'bC\).
  Then \(z\) is the color immediately beneath the top color on the stack \(S'\) and 
  \(azc \subseteq S'\) and \(zI(c,t_0, \dotsc, t_n)at_n \subseteq A'bC\).
\end{claim}
\begin{proof}
  Recall that, in the state \((S', A'bC)\), the greedy algorithm sorted the color \(c\). 
  Since the final \(z\) precedes the final \(c\) in \(S'A'bC\), 
  this implies that, with respect to the state \((S', A'bC)\), 
  the color \(c\) is sortable and the color \(z\) is not sortable.
  By Lemma~\ref{lem:secondinstack} (with \(S'\), \(A'bC\), \(c\) and \(z\) playing the roles
  of \(S\), \(R\), \(y\), and \(x\), respectively),
  \(c\) is the top color and \(z\) is the color immediately below \(c\) on the stack \(S'\)
  and no color except possibly \(c\) precedes \(z\) in \(A'bC\).
  So, with \eqref{eq:Cstructure}, we have 
  \[
    A'bC \supseteq zC \supseteq zI(c,t_0, \dotsc, t_n)at_n.
  \]

  Recall that \(a \in S'A' \subseteq A\) by \eqref{eq:aonstack} and \eqref{eq:S'A'subsetA}
  and note that \eqref{eq:canotinA} implies that the final \(a\) in \(A\) precedes the first \(c\)
  in \(A\).  Therefore, the assumption that \(c\) is on the stack \(S'\) implies
  that \(a \subseteq S'\) as well.
  Furthermore, \(c\) is the top color on \(S'\) and \(z\) is the color immediately below \(c\) on \(S'\), 
  we have \(azc \subseteq S'\). 
\end{proof}

\begin{claim}\label{clm:Sprime}
There are at least three colors in \(S'\)
and when we define \(y\) to be the color immediately beneath the top color in the stack \(S'\),
we have \(y \neq a\), \(y \neq c\), \(ayc \subseteq S'\), and  \(yI(c,t_0, \dotsc, t_n)at_n \subseteq A'bC\).
\end{claim}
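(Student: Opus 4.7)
The plan is to analyze the state $(S', A'bC)$ of the greedy algorithm just before $c$ is sorted, exploiting (\ref{eq:aonstack}), (\ref{eq:Cstructure}), (\ref{eq:canotinA}), and the structure of the final state with $abt \subseteq S$. First I would show that $c$ sits on top of $S'$. Since greedy sorts $c$ next, $c$ is sortable in $(S', A'bC)$, so any copies of $c$ on $S'$ are consecutively at the top. To rule out $c \notin S'$ entirely, let $z$ be the top color of $S'$: if $z \notin A'bC$ then $z$'s final occurrence is in $S'$ --- strictly before $c$'s final occurrence in $C$ --- and the greedy rule would sort $z$ first, a contradiction; if $z \in A'bC$, then $z$ is either unsortable (blocked by a sandwich in $S'A'bC$) or sortable with later termination than $c$, and in either sub-case I would combine the final state structure with the Case~3.2 assumption and Lemma~\ref{lem:secondinstack} to derive a contradiction. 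Next I would show $a \in S'$: by (\ref{eq:aonstack}), $a \in S'A'$, and if $a$ were only in $A'$ then every copy of $a$ in $A$ would lie in $A'$; but the $c$ on top of $S'$ was pushed from $A_P$ (the processed prefix of $A$), giving $ca \subseteq A$ and contradicting (\ref{eq:canotinA}).

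Second, defining $y$ as the color immediately beneath the topmost $c$-block in $S'$, I would verify that $a$, $y$, $c$ are three distinct colors in $S'$ with $ayc \subseteq S'$. The inequality $y \neq c$ follows from the maximality of the $c$-block. The inequality $y \neq a$ is subtler: supposing $y = a$, after sorting $c$ the color $a$ would sit directly below the newly-pushed $b$ on the stack (in the sub-case where $A'$ contains no non-$c$ sock; otherwise an intervening color arises), and I would invoke Lemma~\ref{lem:nodsandwich} on this configuration to derive either case~(IV) --- giving $I_{1,n} \subpattern X$ and satisfying the main theorem's conclusion --- or a direct contradiction with the Case~3.2 assumption.

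Third, to establish $yI(c,t_0,\dots,t_n)at_n \subseteq A'bC$, I would combine (\ref{eq:Cstructure}), which provides $I(c,t_0,\dots,t_n)at_n \subseteq C \subseteq A'bC$, with a separate argument that $y$ appears in $A'bC$ before this subpattern begins. If $y \notin A'bC$, then $y$'s last occurrence is in $S'$; but then at some earlier greedy state $y$ was the top of the stack and sortable, so greedy would have sorted $y$ then --- contradicting $y \in S'$ now. With $y$ located in $A'bC$, I would show its position is before the $I$-subpattern by tracing when $y$ was pushed onto $S'$ relative to the eventual formation of the colors $t_0, \dots, t_n$ above $b$, using Lemma~\ref{lem:interlinking} if needed. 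I expect this final step --- correctly placing $y$ in $A'bC$ --- to be the main obstacle, as it requires careful handling of the greedy history together with the interplay between the stack structure and the input.
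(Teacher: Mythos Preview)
Your plan diverges from the paper's and has a real gap. The paper does not argue about $y$ abstractly; instead it splits on whether $b$ is blocked by a $c$-sandwich in $S'A'bC$. If not, then $b$ is unsortable in $(S',A'bC)$ (its final occurrence precedes that of $c$, yet greedy chose $c$), $c$ is sortable, and Lemma~\ref{lem:secondinstack} forces $b$ and $c$ to be the top two colors of $S'$; hence $y=b$, and $y\in A'bC$ is immediate. If $b$ \emph{is} blocked by a $c$-sandwich, let $\hat y$ be the partner color of the first such sandwich; after ruling out (via the Case~3.2 hypothesis) the possibility that the final $\hat y$ follows the final $c$, Lemma~\ref{lem:secondinstack} applies to $\hat y$ and $c$, giving $y=\hat y$ and (from its second conclusion) $\hat y\in A'$. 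In each branch $y$ is named concretely, and $y\neq a$ drops out of \eqref{eq:canotinA}: in the first branch $y=b\neq a$, and in the second $c\hat y\subseteq S'A'\subseteq A$ forces $\hat y\neq a$.

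Your direct route stumbles in two places. First, your treatment of $y\neq a$ is not a proof of the claim: you propose that if $y=a$ one could instead establish the main theorem via Lemma~\ref{lem:nodsandwich}\ref{IV}, but this neither verifies the stated conclusion $ayc\subseteq S'$ nor checks the hypotheses of that lemma (you have not exhibited $S',R'$ with a unique good sortable color). Second, and more seriously, your argument that $y\in A'bC$ fails. You assert that if $y\notin A'bC$ then ``at some earlier greedy state $y$ was the top of the stack and sortable,'' but the greedy algorithm only examines the stack \emph{between} moves. If the last $y$-sock and the $c$-sock directly above it in $S'$ were pushed during the \emph{same} move---separated in $X$ only by copies of the color then being sorted---then $y$ never sits alone on top between moves after its final copy is stacked, and your contradiction does not fire. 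Closing this requires something like the paper's concrete identification of $y$; your proposed appeal to Lemma~\ref{lem:interlinking} does not help, as that lemma concerns the structure of $C$ after $c$ is sorted, not the location of $y$.
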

\begin{proof}
  If \(b\) is not blocked by a \(c\)-sandwich, then
  because \(b\) is distinct from \(a\) and \(c\) and the final \(b\) occurs 
  before the final \(c\) in \(S'A'bC\),
  the hypothesis of Claim~\ref{clm:azc} is satisfied with \(b\) playing the role of \(z\). 
  This implies that \(y = b\), and that the desired conclusion holds.

So we can assume that \(b\) is blocked by a \(c\)-sandwich in \(S'A'bC\).
Let \(\hat{y}\) be the other color in the first \(c\)-sandwich that blocks \(b\) in \(S'A'bC\). 
By definition,
\begin{equation}\label{eq:hatynotc}
  \hat{y} \neq c.
\end{equation}
We will show that \(\hat{y}=y\) and that the desired conclusion holds.
To see this,  first note that we must have \(\hat{y}c \subseteq S'A' \subseteq A\)
and \(c\hat{y} \subseteq S'A' \subseteq A\). Furthermore, 
\eqref{eq:canotinA} and \(c\hat{y} \subseteq S'A' \subseteq A\) imply that
\begin{equation}\label{eq:hatynota}
\hat{y} \neq a 
\end{equation}
and with \eqref{eq:aonstack}
\begin{equation}\label{eq:ahaty}
a\hat{y} \subseteq S'A'.
\end{equation}

If the final \(\hat{y}\) follows the final \(c\) in \(S'A'bC\), then,
after \(c\) is sorted,
the color \(\hat{y}\) must appear off the stack
and, with \eqref{eq:ahaty}, we will also have \(a\hat{y}b\) on the stack.
Then, the fact that \(b\) is never sorted by the greedy algorithm implies
that \(\hat{y}\) is never sorted by the greedy algorithm.
Furthermore, since we cannot have a \(\hat{y}b\hat{y}\)-sandwich on the stack, 
we must have \(a\hat{y}bt_n \subseteq S\) and \(\hat{y} \in R\). 
But this contradicts the fact that we are in Case 3.2.

Therefore, the final \(\hat{y}\) precedes the final \(c\) in \(S'A'bC\).
Recall that \(\hat{y}\) is the color other than \(c\) that is 
in the first \(c\)-sandwich that blocks \(b\) in \(S'A'bC\).
Since \(b \notin C\), we have \(\hat{y}b \subseteq S'A'b\).
Because \(c\) is sortable in \(S'A'bC\), 
there is no sandwich consisting of \(b\) and \(\hat{y}\) 
which blocks \(c\) in \(S'A'bC\)
and both the final \(b\) and the final \(\hat{y}\) in \(S'A'bC\) precede the final
\(c\) in \(S'A'bC\),
this implies that \(b \hat{y} \nsubseteq S'A'bC\).
In particular, we have \(\hat{y} \notin C\).
We claim that the color \(\hat{y}\) is not blocked by a \(c\)-sandwich in \(S'A'bC\).
For a contradiction, assume that \(\bar{y}\) is the color other than \(c\) in such a sandwich.
Since \(\hat{y} \notin C\), this sandwich is in \(S'A'b\),
and \(\bar{y}\hat{y} \subseteq S'A'b\). 
Since \(b \hat{y} \nsubseteq S'A'bC\), this implies that \(\bar{y} \neq b\).
Therefore, there is a sandwich consisting of \(\bar{y} \neq b\) and \(c\) 
in \(S'A'b\), so this sandwich blocks \(b\) as well as \(\hat{y}\).
Furthermore, since there is no sandwich consisting of \(\bar{y}\) and \(\hat{y}\)
that blocks \(c\) in \(S'AbC\), we have \(\hat{y} \bar{y} \nsubseteq S'Ab\).
But this contradicts the fact that \(\hat{y}\) is in the first \(c\)-sandwich in \(S'A'bC\)
that blocks \(b\).

So we have established that the color \(\hat{y}\) is not blocked by a \(c\)-sandwich  
and the final \(\hat{y}\) precedes the final \(c\) in \(S'A'bC\).
Also, \eqref{eq:hatynotc} and \eqref{eq:hatynota} imply
\(\hat{y}\) is distinct from \(a\) and \(c\). 
So, the hypothesis of Claim~\ref{clm:azc} is satisfied with  
\(\hat{y}\) playing the role of \(z\).
Therefore, \(y = \hat{y}\) and the desired conclusion holds.
\end{proof}

Define \(d'\) to be the color sorted by the greedy algorithm immediately
before reaching the state \((S', A'bC)\).
By Claim~\ref{clm:Sprime}, we have \(ayc \subseteq S'\)
and \(yI(c, t_0, \dotsc, t_n)at_n \subseteq A'bC\).
Note that \(y\) and \(c\) are the only sortable colors in \(aycyI(c, t_0, \dotsc, t_n)at_n\),
because every other color is blocked by a \(ycy\) sandwich.
Furthermore, after sorting \(c\) we have \((ayyt_0, I(t_0, \dotsc, t_n)at_n)\) as a substate which Observation~\ref{obs:inf_unsort} implies is unsortable. 
Therefore, with Observation~\ref{obs:inf_unsort}, \(y\) is the only good sortable color in \(aycyI(c, t_0, \dotsc, t_n)at_n\).
This means that the hypothesis of Claim~\ref{clm:dsandwich} is satisfied with
\(y\), \(d'\), \(ayc\), and \(yI(c, t_0, \dotsc, t_n)a t_n\) playing the roles
of \(a\), \(d\), \(S'\) and \(R'\), respectively.
Therefore, we can assume that \(y\) is blocked by a \(d'\)-sandwich in \(X\).
Then, the last statement of Lemma~\ref{lem:greedy3} implies that a subpattern of \(X\) is in \(\Gamma\).

\section{Proofs of Lemmas~\ref{lem:greedytriv}, \ref{lem:greedy3}, and \ref{lem:greedy2}}\label{sec:deferedproofs}

\greedytriv*
\begin{proof}
    Assume, for a contradiction, that \(X\) is a  minimal counterexample.
    Let \(aba\) be the first sandwich in \(X\).
    Recall that since \(X\) is trivially unsortable, every color is blocked
    by a sandwich.  That is, for all \(z\in X\), there exist distinct colors \(x\) and \(y\) in \(X\) which are distinct from \(z\) such that \(xyxz\subseteq X\). 
    This implies that every color must follow the first sandwich \(aba\).
    Let \(z\) be the first of the colors \(a\) and \(b\) that terminates in \(X\). 
    Let \(xyx\) be the first sandwich in \(X\) that blocks \(z\).

    \textbf{Case 1:} \(x = a\).
    
    Then \(xyx = aya\) blocks \(z\), so \(z = b\).
    Suppose the \(y\) in the sandwich \(aya\) follows the first sandwich \(aba\), then 
    \(X \supseteq abayaba \cong abacaba \cong \mathcal{T}_1\).
    Otherwise, because \(aba\) (and not \(aya\)) is the first sandwich, we have \(X \supseteq abya\).
    This \(abya\) subordering is followed by \(ba\) and the color \(y\) in \(X\).
    Therefore, either 
    \(X \supseteq abyabya \cong abcabca \cong \mathcal{T}_3\), \(X \supseteq abyayba \cong abcacba \cong \mathcal{T}_4\), or 
    \(X \supseteq abyabay \cong abcabac \cong \mathcal{T}_5\). 

    \textbf{Case 2:} \(x = b\).
    
    Then \(xyx = byb\) blocks \(z\), so \(z = a\).
    Suppose the \(y\) in the sandwich \(byb\) follows the first sandwich \(aba\). Then 
    \(X \supseteq abaybab \cong abacbab \cong \mathcal{T}_2\).
    Otherwise, because \(aba\) is the first sandwich (and not \(aya\)), we have \(X \supseteq abya\).
    Since \(byb\) blocks \(z = a\) and the final \(b\) follows
    the final \(z = a\),  the subordering \(bab\) must follow the subordering \(abya\).
    Furthermore, since \(y\) must follow the first sandwich, 
    the color \(y\) must follow this \(abya\).
    So, as in the previous case,  \(abya\) is followed by \(ba\) and the color \(y\).
    Therefore, as in the previous case, either \(\mathcal{T}_3\), \(\mathcal{T}_4\), or \(\mathcal{T}_5\) is a subpattern of \(X\).

    \textbf{Case 3:} \(x \neq a\) and \(x \neq b\).

    \textbf{Case 3.1:} The first \(x\) appears before the first sandwich \(aba\).
    
    Then, because \(aba\) is the first
    sandwich to appear, the color \(x\) must also follow \(aba\).
    Therefore, \(X \subseteq xabaxab \cong abcbacb \cong \mathcal{T}_6\) 
    or \(X \subseteq xabaxba \cong abcbabc \cong \mathcal{T}_7\).

    \textbf{Case 3.2:} The first \(x\) appears between the first and last \(a\)
    in the first sandwich \(aba\).

    By the case, an \(axa\) sandwich precedes the first \(xyx\) sandwich.
    Since the first sandwich that blocks \(z\) is \(xyx\), we must have \(z = a\).
    Note that, because \(xyx\) blocks \(z = a\), we also have \(y \neq a\).
    
    The color \(x\) must follow the \(ab\) in the first sandwich \(aba\), as
    otherwise an \(axa\) sandwich would proceed the first \(aba\) sandwich.
    Since \(xyx\) is not the first sandwich in \(X\) and \(y \neq a\), 
    either \(X \supseteq abxayx\) or \(X \supseteq abxyax\).
    We can assume that both \(aba\) and \(xyx\) in this subordering 
    are the first such subordering to appear in \(X\).
    If \(y = b\), then  the subordering is
    \(abxabx\) or \(abxbax\). But then \(bxb\) precedes \(xbx = xyx\),
    so \(xbx  = xyx\) is not the first sandwich which blocks \(z = a\).
    Therefore, we can assume  \(y \neq b\). 
    Therefore, the colors \(a\) and \(b\) are distinct
    from the colors \(x\) and \(y\), so
    either \(X \supseteq abxayxab \cong abcadcab \cong \mathcal{T}_8\) or the subordering is \(abxyax\).
    So, we can assume the subordering is \(abxyax\).
    
    Recall that every color must follow the first \(aba\) sandwich, so every color must follow \(abxya\). 
    This with the fact that \(a\) is neither \(x\) nor \(y\)
    implies that every color that is not \(a\) is blocked by either
    \(axa\) or \(aya\).
    Furthermore, \(a\) is blocked by \(xyx\).
    Therefore, every color is blocked by one of the three sandwiches
    \(axa\), \(aya\), or \(xyx\).
    Since none of these three sandwiches includes the color \(b\), 
    if we remove the color \(b\) from \(X\) we have a trivially unsortable sock ordering
    which contradicts the minimality of \(X\).

    \textbf{Case 3.3:} The first \(x\) appears after the first sandwich \(aba\).
    
    So, we have \(abaxyx\) in \(X\) where \(aba\) and \(xyx\) in this subordering
    are the first such subordering to appear in \(X\).  
    If \(y = a\), then we must have \(z = b\), but then
    \(axy = axa\) appears before \(xyx\), a contradiction to the fact that
    \(xyx\) is the first sandwich that blocks \(z = b\).
    Similarly, if \(y = b\), then we must have \(z = a\), but then \(bxy = bxb\)
    blocks \(z=a\) and appears before \(xyx\).
    Therefore, the colors \(x\) and \(y\) are distinct from the colors \(a\) and \(b\).
    Hence, 
    \(X \supseteq abaxyxba \cong abacdcba \cong \mathcal{T}_9\) or \(X \supseteq abaxyxab \cong abacdcab \cong \mathcal{T}_{10}\).
\end{proof}

\greedythree*
\begin{proof}
First note that if a proper subordering of \(X\) is unsortable, then \(X\) is unsortable.
Therefore, to prove the lemma, we can assume that every proper subordering of \(X\) is sortable,
and prove that a subpattern of \(X\) is in \(\Gamma\), since this will imply that \(X\) is unsortable.
Assume for a contradiction that no subpattern of \(X\) is in \(\Gamma\).
Let 
\(T := (t_0, \dotsc, t_n) \) and 
\(P \in \{atb, abt, bI(T)at_n\}\) such that \(P \subseteq R\).
We may write \(X = {\mS}d{\mR}\) where \(S \subseteq {\mS}\) and \(d \notin {\mR}\) and \(R \subseteq {\mR}\).

Note that \(a\) is the only sortable color in 
\(abtatb\) and \(abtabt\).
We claim that \(b\) is the only good sortable color
in \(abtbI(T)at_n\).  To see this note that when \(n = 0\) the color \(b\) is the only
sortable color in  \(abtbI(T)at_n = abtbat\) and when \(n \ge 1\),
\(b\) and \(t\) are the only sortable colors in \(abtbI(T)at_n\), but
sorting \(t = t_0\) leaves \((abbt_1, I(t_1, \dotsc, t_n)at_n)\) which is unsortable by Observation~\ref{obs:inf_unsort}.
Define
\[
  (z, y) := 
  \begin{cases}
  (a,b) & \text{if \(P \in \{atb, abt\}\),} \\
  (b,a) & \text{if \(P = bI(T)at_n\)}.
  \end{cases}
\]

We have the following.
\begin{claim}\label{clm:zblock}
The color \(z\) is blocked by a \(d\)-sandwich in \(X\) and 
the color \(z\) is not blocked by a sandwich consisting only of colors in \(P\).
\end{claim}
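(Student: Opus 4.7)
The first half of the claim is essentially a restatement of the hypothesis of Lemma~\ref{lem:greedy3}. Indeed, the color \(z\) is defined so as to match whichever of the two bullets of the lemma is in force: if \(P \in \{atb, abt\}\) then \(z = a\) and the first bullet directly guarantees that \(a\) is blocked by a \(d\)-sandwich in \(X\), while if \(P = bI(T)at_n\) then \(z = b\) and the second bullet gives the same conclusion for \(b\). No further argument is needed for this assertion.

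For the second half, I would argue by contradiction: assume there exist distinct colors \(w, g \in P\) such that \(wgwz \subseteq X\). The plan is to exhibit a subpattern of \(X\) that already belongs to \(\Gamma\) and does not use the color \(d\); since \(d\) genuinely appears in \(X\), this subpattern is strict, and its membership in \(\Gamma\) contradicts our standing assumption (in the proof of Lemma~\ref{lem:greedy3}) that no subpattern of \(X\) is in \(\Gamma\). To build such a subpattern, I would splice together three pieces that already lie in \(X\): the occurrence of \(abt\) inside \(S \subseteq X\) (which appears in \(X\) strictly before the terminal \(d\) of the sorting sequence), the occurrence of \(P\) inside \(R \subseteq X\) (which appears strictly after the final \(d\)), and the postulated sandwich \(wgw\) blocking \(z\).

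I would then split into cases according to which form of \(P\) is in play. When \(P \in \{atb, abt\}\), the sandwich colors lie in \(\{b, t\}\), so \(wgw\) is either \(btb\) or \(tbt\); in each case, combining \(wgwz = wgwa\) with the preceding \(abt\) and the succeeding copy of \(P\) yields a short fixed-length subpattern of \(X\) that I expect to identify directly with one of the \(\mathcal{T}_i\). When \(P = bI(T)at_n\), the sandwich colors lie in \(\{a, t_0, \dotsc, t_n\}\); here I would apply Observation~\ref{obs:inf_unsort} together with the interlacing structure produced by Lemma~\ref{lem:interlinking} to argue that the combined structure contains some \(I_{j,m}\) as a subpattern. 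The main obstacle is this infinite-color case, since the sandwich \(wgw\) can slot into the interlaced pattern \(I(T)\) in many positions; the bookkeeping amounts to checking, for each location of \(wgw\) relative to \(I(T)\), that the resulting combined subordering of \(X\) is isomorphic to a member of the infinite basis families, which Observation~\ref{obs:inf_unsort} is perfectly suited to certify.
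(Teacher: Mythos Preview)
Your treatment of the first conjunct is fine and matches the paper.

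For the second conjunct, however, you are working much harder than necessary, and your argument is left incomplete precisely where it gets delicate. The paper dispatches this in one line using facts already on the table: just before the claim it was established that \(z\) is the only good sortable color in \(abtP\), and the standing assumption at the start of the proof of Lemma~\ref{lem:greedy3} is that every strict subordering of \(X\) is sortable, in particular \(X-d\). If some sandwich \(wgw\) with \(w,g\in P\) blocked \(z\) in \(X\), then (since \(w,g\neq d\)) the same sandwich blocks \(z\) in \(X-d\), so any full sorting of \(X-d\) would have to sort one of \(w,g\) before \(z\); but \(w,g\) are colors of \(abtP\), contradicting that \(z\) must be sorted first among those colors. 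No case analysis is needed.

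Your plan to manufacture an explicit \(\Gamma\)-subpattern avoiding \(d\) is in principle a legitimate route to the same contradiction (against the assumption that no subpattern of \(X\) lies in \(\Gamma\)), but you have not actually carried it out. In the finite cases you still need to pin down where the sandwich \(wgw\) sits relative to the copy of \(abt\) in \(S\) and the copy of \(P\) in \(R\); there are several sub-cases and it is not obvious each one lands on a \(\mathcal{T}_i\). In the infinite case \(P=bI(T)at_n\) the difficulty you flag is real: the sandwich colors range over \(\{a,t_0,\dots,t_n\}\) and the sandwich itself need not sit inside \(P\), so invoking Observation~\ref{obs:inf_unsort} and Lemma~\ref{lem:interlinking} does not by itself identify which \(I_{j,m}\) you obtain. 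As written, this is a proof sketch with a genuine gap, whereas the paper's two-sentence argument closes the claim immediately.
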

\begin{proof}
The first conjunct is given by the hypothesis of the lemma.
For the second conjunct, recall that, by the above discussion, \(z\) is the only good sortable color in \(abtP \subseteq X - d\).
Therefore, because \(X - d\) is a proper subordering of \(X\) and is therefore sortable, 
\(z\) is not blocked by a sandwich consisting only of colors in \(P\).
\end{proof}

\begin{claim}\label{clm:3stackfinal}
One of the following holds.
  \begin{enumerate}[label=(\roman*)]
      \item\label{3sa} one of \(dabt\), \(adbt\), \(abdt\), 
        is a subordering of \({\mS}\) or \(abtde \subseteq {\mS}\) for some color \(e \notin P\); or
      \item\label{3sb} \(atab \subseteq {\mR}\) or \(tP \subseteq {\mR}\); or
      \item\label{3sc} for some color \(e \notin P\), 
      \(e \in {\mS}\) and \(eP \subseteq {\mR}\).
  \end{enumerate}
\end{claim}
\begin{proof}
Recall that by Claim~\ref{clm:zblock}, \(z\) is blocked by a \(d\)-sandwich in \(X\).
That is, for some color \(e\), we either have a sandwich of the form
\(ded\) which blocks \(z\) or a sandwich of the form \(ede\) which blocks \(z\).

\textbf{Case 1: A sandwich of the form \(ded\) blocks \(z\)}.
In this case, we will show that \ref{3sa} holds, so 
suppose that neither \(dabt\), \(adbt\), nor \(abdt\) is a subordering of \({\mS}\).
Then, because \(d \notin {\mR}\) implies \(de \subseteq {\mS}\), we have 
\(abtde \subseteq {\mS}\).
To show that \ref{3sa} holds, we then only need to show that \(e \notin P\).
Since \(ded\) blocks \(z\), we have \(e \neq z\).
We also have \(e \neq y\), since otherwise we would have a \(yty\) sandwich
in \({\mS}\) which would block \(z\) and this is forbidden by Claim~\ref{clm:zblock}.
Furthermore, \(e \neq t\), since we are assuming \(abdt \subseteq abtdt\) is not in \({\mS}\).
So, we are left to consider the case when \(n \ge 1\), \(bI(T)at_n\) is in \(R\),
and \(e = t_i\) for some \(1 \le i \le n\).
Then 
\[
X \supseteq abtdt_idbI(T)at_n \supseteq btdt_ibtt_i \cong dabtdat \cong I_{1,0},
\] 
a contradiction.

\textbf{Case 2: A sandwich of the form \(ede\) blocks \(z\)}.
Since \(d \notin {\mR}\), we must have \(e \in {\mS}\).
If \(ede \subseteq {\mS}\), then a sandwich of the form \(ded\) blocks \(z\), so we are
in the first case.
Therefore, we can assume that \(ez \subseteq {\mR}\).

By Claim~\ref{clm:zblock}, 
the color \(z\) is not blocked by a \(yty\) sandwich. 
This, with the fact that \(yt \subseteq abt \subseteq S \subseteq {\mS}\) and \(ez \subseteq {\mR}\),
yields \(e \neq y\).
Since \(ede\) blocks \(z\), we have \(e \neq z\).
Therefore, \(e \notin \{y, z\} = \{a, b\} \).

\textbf{Case 2.1: \(P \in \{atb, abt, bat = bI(t_0)at_0\} \)}.
Recall that Claim~\ref{clm:zblock} implies that a \(yty\) 
sandwich does not block \(z\) in \(X\).
Since \(yt \subseteq abt \subseteq S\), \(z\) cannot follow the \(y\) in \(P\).
This, with the fact that \(ez \subseteq {\mR}\), 
implies that we have \(eP \subseteq {\mR}\) when \(P\) begins with \(zy\).
So, we have \(eP \subseteq {\mR}\) when \(P \in \{ abt, bI(t)at = bat \} \).
Now suppose \(P = atb\) and \(eP \nsubseteq {\mR}\). 
Then, because \(ea = ez \subseteq {\mR}\)
and \(z=a\) cannot follow \(y = b\) in \(P\), 
we either have 
\(e = t\) and \(atab \subseteq {\mR}\) or
\(e \neq t\) and \(aetab \subseteq {\mR}\) or \(ateab \subseteq {\mR}\).
Note that, in all cases, we have \(atab \subseteq {\mR}\), so \ref{3sb} is satisfied.
Therefore, we can assume that  \(eP \subseteq {\mR}\).  
If \(e = t\), then \(tP \subseteq {\mR}\) and condition \ref{3sb} is satisfied.
If \(e \neq t\), then because we have already shown that \(e \notin \{a,b\}\) and we are in Case 2.1, 
we have \(e \notin P\), so condition \ref{3sc} is satisfied.

\textbf{Case 2.2: \(n \ge 1\) and \(P = bI(T)at_n = bt_1tI(t_1, \dotsc, t_n)at_n\).}

In this case, we have \(z = b\) and \(y = a\).  
The color \(b\) cannot follow \(t_1t\) in \({\mR}\) because Claim~\ref{clm:zblock} 
implies that \(b\) is not blocked by a \(tt_1t\) sandwich. 
Therefore, because \(P = bt_1tI(t_1,\dotsc,t_n)at_n\) and 
\(eb \subseteq {\mR}\), if we do not have \(eP \subseteq {\mR}\), then we must
have \(bt_1btI(t_1,\dotsc,t_n)at_n \subseteq {\mR}\). 
But then, 
\[
X \supseteq abtdR \supseteq abtdbt_1btI(t_1, \dotsc, t_n)at_n 
\supseteq btdt_1btt_1 \cong dabtdat \cong I_{1,0},
\]
a contradiction.

Therefore, we can assume that \({\mR} \supseteq eP = ebt_1tI(t_1, \dotsc, t_n)at_n\).
Note that if \(e = t\), then \ref{3sb} holds and
if \(e \notin P\), then \ref{3sc} holds.
Since we have already shown that \(e \notin \{a, b\}\), we can assume \(e = t_i\) where 
\(1 \le i \le n\).
Recall that \(t_i = e\in {\mS}\), so since Claim~\ref{clm:zblock} implies that 
no \(t_itt_i\) sandwich blocks \(b\) in \(X\), we must have
\(abtt_i \subseteq {\mS}\).
Therefore, 
\[
X \supseteq abtt_idt_ibt_1tI(t_1, \dotsc t_n)at_n \supseteq btdt_ibtt_i \cong
dabtdat \cong I_{1,0},
\]
a contradiction.
\end{proof}

Suppose Claim~\ref{clm:3stackfinal}\ref{3sa} holds.
Note that \(at \subseteq P\) or \(n \ge 1\) and \(I(T)at_n \subseteq P\). 
Therefore,  if \(dabt \subseteq {\mS}\), then 
either
\(X \supseteq dabtdat \cong I_{1,0}\) or
\(X \supseteq dabtdI(T)at_n \cong I_{1,n}\), a contradiction.
Otherwise, one of the orderings in the following table is a
subordering of \(X\).
\begin{center}
    \begin{tabular}{|c||c||c|}
    \hline

\(adbtdatb \cong \mathcal{C}_{2,4}\)  & 
\(adbtdabt \cong \mathcal{C}_{2,3}\) &
\(adbtdbI(T)at_n \cong I_{2,n}\) \\\hline

\(abdtdatb \cong \mathcal{C}_{2,5}\)  & 
\(abdtdabt \supseteq abdtabt \cong I_{1,0}\) &
\(abdtdbI(T)at_n \cong I_{3,n}\) \\\hline

\(abtdedatb \cong \mathcal{C}_{2,7}\)  & 
\(abtdedabt \cong \mathcal{C}_{2,6}\) &
\(abtdedbI(T)at_n \cong I_{5,n}\) \\\hline

\end{tabular}
\end{center}

If Claim~\ref{clm:3stackfinal}\ref{3sc} holds, then by swapping the labels of \(d\) and
\(e\), we can see that either \(I_{1,n} \subpattern X\) or one of the orderings in the table above is a subpattern of \(X\).

Finally, if Claim~\ref{clm:3stackfinal}\ref{3sb} holds, then 
one of the four orderings in the following table is a subordering of \(X\).
\begin{center}
    \begin{tabular}{|c||c||c||c|}
    \hline
\(abtdatab \cong \mathcal{C}_{2,1}\) &
\(abtdtatb \cong \mathcal{C}_{2,2}\) &
\(abtdtabt \supseteq abdtabt \cong I_{1,0}\) &
\(abtdtbI(T)at_n \cong I_{4,n}\) \\\hline
\end{tabular}
\end{center}
\end{proof}

\greedytwo*
\begin{proof}
As in the proof of Lemma~\ref{lem:greedy3}, 
to prove the lemma, we can assume that every proper subordering of \(X\) is sortable,
and prove that a subpattern of \(X\) is in \(\Gamma\).
  For a contradiction, assume that no subpattern of \(X\) is in \(\Gamma\).
We may write \(X = {\mS}d{\mR}\) where \(S \subseteq {\mS}\) and \(d \notin {\mR}\) and \(R \subseteq {\mR}\).
Since \((at, R)\) is terminal, there exists a sandwich that blocks \(t\) in
\(atR\). Therefore, there exists a color that is neither \(a\) nor \(t\) in \(R\).
Let \(r\) be the first color that is neither \(a\) nor \(t\) in \(R\).
Furthermore, since a color that is neither \(a\) nor \(t\) must precede \(t\) in \(R\), 
we have \(rt \subseteq R\).

\begin{claim}\label{clm:aratinR}
    \(ar\subseteq R\) and \(at \subseteq R\).
\end{claim}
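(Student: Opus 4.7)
The plan is to prove both inclusions by contradiction, using Lemma~\ref{lem:modsorting} to transfer unsortability from the terminal state \((at, R)\) to the smaller state \((t, R)\) (obtained by removing the color \(a\), which lies below the top of the stack \(at\)), and Lemma~\ref{lem:firsttwo} to force colors following the last \(t\) in \(tR\) to be blocked by \(t\)-sandwiches once \(t\) itself is unsortable there. The non-terminality of \((t, R)\) (from the hypothesis ``for every \(x \in S\), \((x,R)\) is not terminal'') produces a sortable color \(y\) in \((t, R)\), which \((at, R)\) terminal prevents from being sortable; this mismatch is what feeds Lemma~\ref{lem:modsorting}.

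For \(at \subseteq R\), I would suppose \(at \nsubseteq R\), so every \(a \in R\) follows every \(t \in R\). Let \(b_2 c_2 b_2 t \subseteq atR\) be the sandwich blocking the last \(t\) in \(R\). If \(b_2 = a\), then whether the first \(a\) is the stack-\(a\) or both lie in \(R\), some \(a \in R\) precedes the last \(t \in R\), contradicting the assumption. So \(b_2 \neq a\), the sandwich persists in \(tR\), and \(t\) is unsortable in \((t, R)\). Pick \(y \neq t\) sortable in \((t, R)\); Lemma~\ref{lem:modsorting} forces \(y = a\) or \(ay \subseteq R\) (the three-color-stack case is vacuous because \(|at|=2\)). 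When \(y = a\), I combine \(rt \subseteq R\) with \(at \nsubseteq R\) to construct a \(trt\) sandwich in \(tR\) using the stack-\(t\), an \(r \in R\), and a \(t \in R\); since the last \(a\) in \(R\) lies after every \(t \in R\), this sandwich blocks \(a\) in \(tR\), contradicting sortability. When \(y \neq a\), the inclusions \(ay \subseteq R\) and \(at \nsubseteq R\) place \(y\) after the last \(t\) in \(tR\); Lemma~\ref{lem:firsttwo} (applied with \(t\) as the first and unsortable color of \(tR\)) then produces a \(t\)-sandwich blocking \(y\), again a contradiction.

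For \(ar \subseteq R\) the argument runs in parallel using the sandwich \(b_1 c_1 b_1 r \subseteq atR\) blocking the last \(r\). The case \(b_1 = a\) yields \(ar \subseteq R\) immediately, and the sub-cases \((b_1, c_1) = (t, a)\) and \((b_1, c_1) = (r, a)\) yield \(tat\) or \(rar\) sandwiches forcing an \(a \in R\) to precede some \(r \in R\), each contradicting \(ar \nsubseteq R\). In the remaining case \(b_1, c_1 \neq a\), the sandwich lies in \(tR\); since every \(a \in R\) follows every \(r \in R\), the sandwich is also before the last \(a\) in \(tR\) and blocks it, so \(a\) is unsortable in \((t, R)\). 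Any sortable \(y \neq r, a\) in \((t, R)\) then satisfies \(ay \subseteq R\) by Lemma~\ref{lem:modsorting}; combining this with \(ar \nsubseteq R\) and the already proved \(at \subseteq R\) should place \(y\) after the last \(t\) in \(tR\), and Lemma~\ref{lem:firsttwo} closes the proof. The main obstacle is exactly this last step: verifying that \(y\) always follows the last \(t\) in \(tR\) and that \(t\) itself is unsortable in \((t, R)\), so that Lemma~\ref{lem:firsttwo} applies. This requires a further case split on whether the \(r\)-blocking sandwich is a \(t\)-sandwich \(tut\) (with \(u\) a new color) or uses non-\(t\) colors, and is where the bulk of the technical work sits.
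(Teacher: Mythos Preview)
Your argument for \(at \subseteq R\) is essentially correct and mirrors the paper's strategy (you should also note \(c_2 \neq a\): otherwise the \(a\) in the sandwich \(b_2ab_2\) must lie in \(R\) and precede the last \(t\), contradicting \(at \nsubseteq R\)). The problem is in the \(ar \subseteq R\) half. After you establish that \(r\) is unsortable in \((t,R)\) and that any sortable \(y\) in \((t,R)\) satisfies \(y=a\) or \(ay\subseteq R\), you try to close by placing \(y\) after the last \(t\) and showing \(t\) unsortable in \((t,R)\), so as to invoke Lemma~\ref{lem:firsttwo} with the color \(t\). Neither of these is available in general once \(at\subseteq R\) has been proved; in particular, Lemma~\ref{lem:modsorting} no longer forces \(t\) to be unsortable in \((t,R)\) precisely because the disjunct \(at\subseteq R\) now holds. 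Your proposed ``further case split'' on the shape of the \(r\)-blocking sandwich does not resolve this, because even when that sandwich also blocks \(t\), nothing forces the sortable \(y\) to lie beyond the last \(t\).

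The fix is to apply Lemma~\ref{lem:firsttwo} with \(r\), not \(t\). Since \(r\) is by definition the first color in \(R\) other than \(a\) or \(t\), the assumption \(ar\nsubseteq R\) means only \(t\)'s precede the first \(r\) in \(R\); hence \(r\) is one of the first two colors of \(tR\). You have already shown \(r\) unsortable in \((t,R)\), and from \(ay\subseteq R\) (or \(y=a\in R\)) together with \(ar\nsubseteq R\) the color \(y\) appears after the final \(r\). Lemma~\ref{lem:firsttwo} then blocks \(y\) by an \(r\)-sandwich, giving the contradiction immediately. This is exactly the paper's proof, which in fact handles both halves at once: for \(x\in\{r,t\}\), assume \(ax\nsubseteq R\), observe \(x\) is among the first two colors of \(tR\), apply Lemma~\ref{lem:modsorting} once to get \(x\) unsortable in \((t,R)\) (this single invocation replaces all of your explicit \(b_ic_ib_i\) sandwich-chasing), deduce the sortable \(y\) lies past the final \(x\), and finish with Lemma~\ref{lem:firsttwo} applied to \(x\). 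As a minor point, your sub-case \((b_1,c_1)=(r,a)\) is vacuous, since a sandwich that blocks \(r\) cannot contain \(r\).
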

\begin{proof}
    Let \(x \in \{r,t\}\) and suppose, for a contradiction, that \(ax \nsubseteq R\).
    Note that then \(x\) is one of the first two colors to appear in \(tR\). 
    Lemma~\ref{lem:modsorting} implies that \(x\) is unsortable in \((t, R)\),
    because \(x\) is unsortable in \((at, R)\) and \(x \neq a\) and \(ax \nsubseteq R\).
    Recall that by the hypothesis of the lemma, there exists \(y\) that is sortable in \((t, R)\).
    Since \(y\) is unsortable in \((at, R)\), Lemma~\ref{lem:modsorting} implies that either \(y = a\) or \(ay \subseteq R\).
    In either case, \(y\) must follow the last occurrence of \(x\) in \(R\) 
    because \(ax \nsubseteq R\).
    However, with Lemma~\ref{lem:firsttwo},
    the fact that \(y\) is sortable in \((t, R)\) while \(x\) is unsortable
    in \((t, R)\), and the fact that
    \(x\) is one of the first two colors to appear in \(tR\), imply that 
    \(y\) cannot follow the last occurrence of \(x\) in \(R\), a contradiction.
\end{proof}

\begin{claim}\label{clm:PinR}
There are distinct colors \(c\) and \(b\) that are distinct from \(a\), \(r\), and \(t\),
such that there exists \(P\in\{arat,rart,arbrt, arcbct\}\) such that \(P \subseteq R\).
   In particular, we have \(art \subseteq R\).
\end{claim}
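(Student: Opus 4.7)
The plan is to analyze the sandwich that forces the color $t$ to be unsortable in the terminal state $(at, R)$ and to read off the desired pattern $P$ from its structure. Since $(at, R)$ is terminal and $t \in R$ by Claim~\ref{clm:aratinR}, some copy of $t$ in $R$ must be blocked by a sandwich $xyx$ with $xyxt \subseteq atR$, where $x, y$ are distinct colors both different from $t$. Writing $R = \alpha r_1 \beta$, where $r_1$ is the first $r$ in $R$ and $\alpha$ consists solely of $a$'s and $t$'s, I would first settle the secondary conclusion $art \subseteq R$: if $\alpha$ contains an $a$ then combined with $rt \subseteq R$ we directly obtain $a r_1 t \subseteq R$, and if $\alpha$ is a block of $t$'s alone then $ar \subseteq R$ forces a second $r$ in $\beta$ that follows some $a$, while $rt \subseteq R$ supplies the required $t$.

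For the main conclusion I would split into four cases according to the two colors $\{x,y\}$ of the sandwich that blocks a suitably chosen copy of $t$ in $R$. If the sandwich has the form $ara$, then the two $a$'s of the sandwich together with the blocked $t$ yield $arat \subseteq R$ whenever some $a$-copy in $R$ precedes $r_1$; otherwise $ar \subseteq R$ forces another $r$ after some $a$ in $\beta$ and the sandwich supplies the rest, giving $rart \subseteq R$. If the sandwich is $rar$, one reads off $rart \subseteq R$ directly. If the sandwich has the form $rbr$ with $b \notin \{a, r, t\}$, combining it with the preceding $a$ (supplied either by $\alpha$ or by $ar \subseteq R$) gives $arbrt \subseteq R$. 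Finally, if the sandwich is $cbc$ for two new colors $b, c \notin \{a, r, t\}$ with $b \neq c$, then $ar \subseteq R$ together with the sandwich yields $arcbct \subseteq R$.

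The main obstacle I expect is handling the ``twisted'' sub-cases in which the $a$ or $r$ that should precede the sandwich in $R$ is missing --- for instance, when the sandwich $ara$ blocking $t$ uses the stack $a$ together with an $R$-$a$ (rather than two $R$-$a$'s), or when the blocked $r$ in the sandwich analysis is a later copy than $r_1$ and $\alpha$ is empty. In these configurations the template $P$ may not embed directly inside $R$. To handle them I would invoke the standing hypotheses that $a$ is blocked by a $d$-sandwich in $X$ and that no subpattern of $X$ lies in $\Gamma$: combining the $d$-sandwich around the last-sorted color $d$ with the interlacing structure of $R$ above the stack colors via Lemma~\ref{lem:interlinking} should produce a subpattern of $X$ equivalent to $I_{1,n}$ for some $n \ge 0$, or to one of the finite $\mathcal{C}_1$ or $\mathcal{C}_2$ templates, contradicting the standing assumption and forcing the four templates to suffice.
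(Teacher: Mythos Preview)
Your strategy—read a $t$-blocking sandwich out of $atR$ and extract a template—is what the paper does, but the execution has concrete gaps. Your stand-alone argument for $art \subseteq R$ breaks when $\alpha$ contains no $a$: take $R = ratr$, which satisfies $ar, at, rt \subseteq R$ and makes $(at,R)$ terminal, yet $art \nsubseteq R$. Your four sandwich cases are also not exhaustive as stated (for instance $aya$ or $yay$ with $y\neq r$ are unaccounted for). And neither Lemma~\ref{lem:interlinking} (which tracks colors accumulating above a fixed stack entry while a sorting sequence runs, not the static shape of $R$) nor the hypothesis that $a$ is blocked by a $d$-sandwich bears on this claim, so they will not rescue your twisted sub-cases.

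The paper replaces the whole case split with a single dichotomy: either $rat \subseteq R$ or not. If so, combining with $ar \subseteq R$ from Claim~\ref{clm:aratinR} forces one of $arat$, $rart$, $ratr$ to lie in $R$; the last is eliminated because $atd\cdot ratr \cong I_{1,0}$, and this is the \emph{only} appeal to $\Gamma$-avoidance needed (it is precisely what kills the counterexample $R=ratr$ above). If $rat \nsubseteq R$, then any $t$-blocking sandwich in $atR$ involving the color $a$ would place a non-$a,t$ color and then an $a$ before $t$ in $R$, forcing $rat \subseteq R$; hence the sandwich has the form $cbc$ with $c,b\notin\{a,t\}$. Now $at \subseteq R$ together with $rat \nsubseteq R$ pins the witnessing $a$ before $r_1$, and since $r_1$ is at or before the first $c$ of the sandwich one reads off $arbrt$ (if $r=c$) or $arcbct$. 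This single split absorbs all of your twisted sub-cases at once.
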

\begin{proof}
First, suppose that \(rat \subseteq R\).
Since, by Claim~\ref{clm:aratinR}, we have \(ar \subseteq R\), one of \(arat\), \(rart\), or \(ratr\) appears
in \(R\). 
Since \(atdratr = atdraI(r)tr \cong I_{1,0}\),
we can assume the subordering \(ratr\) does not appear \(R\),
so we are done in this case.

Now, suppose  \(rat \nsubseteq R\).
Since \(r\) is the first color in \(R\) that is not \(a\) or \(t\),
\(rat \nsubseteq R\) implies that 
the first sandwich that blocks \(t\) in \(atR\) does not include the color \(a\).
So there exist distinct colors \(c\) and \(b\) that are both neither \(a\) nor \(t\), 
such that a sandwich of the form \(cbc\) blocks \(t\) in \(atR\).
Since \(r\) is the first color that is not \(a\) or \(t\) to appear in \(R\),
we have either \(r = c\) and 
\(rbrt \subseteq R\) or \(r \notin \{b, c\}\) and \(rcbct \subseteq R\).
By Claim~\ref{clm:aratinR}, we have \(at \subseteq R\).
This, with the fact that \(rat \nsubseteq R\), 
implies that either 
\(arbrt \subseteq R\) or \(arcbct \subseteq R\),
where the distinct colors \(b\) and \(c\) are distinct from \(a\), \(r\), and \(t\).
\end{proof}

\begin{claim}\label{clm:nosandwich}
   For distinct colors \(a\), \(r\), \(t\), \(b\), and \(c\) and 
   every \(P \in \{arat,rart,arbrt, arcbct\}\) the color \(a\) 
   is the only sortable color in \(atP\)
   and \(a\) is a good sortable color in \(atP\).
   Therefore, no sandwich consisting only of the colors that appear in \(atP\) 
   blocks \(a\) in \(X\).  
\end{claim}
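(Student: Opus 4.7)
The plan is to proceed in three stages. First, I would carry out a finite verification: for each of the four patterns $P \in \{arat, rart, arbrt, arcbct\}$, write $atP$ out explicitly and check directly that $a$ is the only sortable color in $atP$. Every non-$a$ color $z$ in $atP$ admits an explicit blocking sandwich of two colors both distinct from $z$; for instance $t$ is blocked by $ara$, $ara$, $rbr$, and $cbc$ respectively across the four cases, while $r$ (and $b$, $c$ where they appear) is blocked by $ata$. The color $a$ itself is not blocked by any sandwich in $atP$, because its two copies lie too early in $atP$ for a pair of repeated non-$a$ colors to precede them.

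Second, I would upgrade the local fact from the previous step to a statement about the ambient sock ordering $X$. Suppose, for contradiction, that some sandwich of colors in $atP$ blocks $a$ in $X$. Let $X'$ be the subordering of $X$ obtained by restricting to those socks whose color belongs to $atP$. In $X'$, every non-$a$ color is still blocked by the sandwich identified in stage one (that sandwich uses only $atP$-colors, so it persists in the larger $X'$), and $a$ is blocked by the assumed new sandwich. Hence $X'$ is trivially unsortable. Because $d$ was the last color sorted by the sequence, $d$ lies in neither $S$ nor $R$, and in particular $d \notin atP$; thus $X'$ is a strict subordering of $X$, contradicting the standing hypothesis of Lemma~\ref{lem:greedy2} that every strict subordering of $X$ is sortable.

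For the ``in particular'' clause, I would deduce each conclusion by displaying, under its negation, a sandwich of $atP$-colors blocking $a$ in $X$. For $atPa \nsubseteq X$, observe that each of the strings $atarata$, $atrarta$, $atarbrta$, $atarcbcta$ already contains the sandwich $trt$ with both outer $t$'s preceding the final $a$; hence $atPa \subseteq X$ would furnish the forbidden blocking configuration and contradict stage two. For $rt \nsubseteq S$, note that $rt \subseteq P \subseteq R$ in every case, so if also $rt \subseteq S$ then $X$ contains $rtrt$ as a subsequence, yielding the sandwich $rtr$ built from the $rt$ of $S$ together with one $r$ of $P$. When $P \in \{arat, rart\}$, some $a$ of $P$ immediately follows the chosen second $r$, delivering $rtra \subseteq X$ as the forbidden configuration. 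When $P \in \{arbrt, arcbct\}$, no $a$ of $P$ lies after its second $r$, and one instead combines this $rtr$ (or the auxiliary $rbr$, $cbc$ sandwich guaranteed by Claim~\ref{clm:PinR}) with the $a$ on the stack from $at \subseteq S$ and the additional $a$-occurrences of $R$ forced by Claim~\ref{clm:aratinR} to locate a blocking $a$.

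The main obstacle is the last sub-case of stage three, namely $rt \nsubseteq S$ when $P \in \{arbrt, arcbct\}$: the required blocking $a$ does not sit inside $P$ after its second $r$, so it must be teased out of the interaction between the stack, $P$, and the extra $a$-occurrences in $R$ forced by the non-terminality hypotheses of Lemma~\ref{lem:greedy2}. The earlier cases of the proof are essentially finite checks, but this final piece is where the hypotheses of the surrounding lemma are used most substantively.
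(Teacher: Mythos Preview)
Your Stages~1 and~2 are correct and amount to the same argument as the paper's, with one cosmetic difference: the paper records that $a$ is the unique \emph{good} sortable color in $atP$ and then argues that any sorting of $X-d$ must sort $a$ first among the $atP$-colors, whence no $atP$-colored sandwich can block $a$; you bypass the ``good sortable'' step by observing directly that if an $atP$-colored sandwich blocked $a$, then the restriction of $X$ to the $atP$-colors would be trivially unsortable and, since $d$ is not among those colors, a strict subordering of $X$. Both routes rest on the same standing hypothesis (every strict subordering of $X$ is sortable) and are equally short.

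The paper's own proof says nothing about the ``in particular'' clause, so there is no comparison to make for Stage~3. Your argument for $atPa \nsubseteq X$ via the $trt$ sandwich is fine. Your argument for $rt \nsubseteq S$, however, has a real gap in the cases $P \in \{arbrt, arcbct\}$, exactly where you flag the obstacle. The sketch ``combine $rtr$ with the $a$ on the stack and the $a$-occurrences forced by Claim~\ref{clm:aratinR}'' does not go through: from $rt \subseteq S$, $a \in S$, and $P \subseteq R$ alone one obtains only configurations such as $artarbrt$ or $ratarbrt$ in $SR$, and neither contains any pattern $xyxa$ with $x,y \neq a$ (the only $a$'s sit at positions where at most one non-$a$ color precedes them twice). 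Claim~\ref{clm:aratinR} gives $ar, at \subseteq R$, but both are already witnessed inside $P$, so no additional $a$ after the $r$'s of $P$ is forced. Also note that $arcbct$ has only one $r$, so ``its second $r$'' is not meaningful there. Thus the proposed route does not close these two cases without further input. Since this particular consequence is never invoked later in the proof of Lemma~\ref{lem:greedy2}, the gap is inessential to the paper's argument.
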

\begin{proof}
    It is not hard to see that \(a\) is the only sortable color in \(atP \subseteq {\mS}R\).
    Furthermore, it is not hard to see that \(a\) is a good sortable color in \(atP \subseteq {\mS}R\).
    Recall that, by assumption, \(X - d\) is sortable.
    This means that for any sorting sequence that sorts \(X - d\),
    the color \(a\) must be sorted first among the colors in \(atP\).
    Therefore, there is no sandwich consisting only of the colors 
    that appear in \(atP\) that blocks \(a\) in \(X\).
\end{proof}

Note that Claims~\ref{clm:PinR} and \ref{clm:nosandwich} establish the first 
statement of the conclusion.

\begin{claim}\label{clm:2stackfinal}
Let \(P \in \{arat, rart,arbrt, arcbct\}\) such that \(P \subseteq R\) where
\(a\), \(r\), \(t\), \(b\), and \(c\) are distinct colors.
One of the following holds.
  \begin{enumerate}[label=(\roman*)]
      \item\label{2sa} 
        \(dat\) or \(adt\) is a subordering of \({\mS}\) or 
        \(atde \subseteq {\mS}\) for some color \(e \notin P\); or
      \item\label{2sb} 
        \(taxat\), \(atxat\), \(txaxt\), or \(taxyxt\) is a subordering of \({\mR}\) 
        where \(a\), \(t\), \(x\), and \(y\) are distinct colors; or
      \item\label{2sc} 
        \(eP \subseteq {\mR}\) for some color \(e \notin P\) 
        and \(eat\), \(aet\), or \(ate\) is a subordering of \({\mS}\).
  \end{enumerate}
\end{claim}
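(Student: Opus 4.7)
The plan is to case-split on the form of the $d$-sandwich that, by hypothesis, blocks $a$ in $X$. Writing $X = S'dR'$, the assumption $d \notin R'$ means every occurrence of $d$ in $X$ lies in $S'$ or is the displayed middle $d$, and the blocking sandwich has either the form $ded$ or $ede$ for some color $e \neq a, d$.

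In the $ded$ case, the first $d$ of the sandwich and the color $e$ must both lie in $S'$ (since $d \notin R'$ forces any second $d$ of the sandwich to be in $S'$ or be the middle $d$, and $e$ is sandwiched between them). Combined with $at \subseteq S'$, I would compare the position of $d$ to the copies of $a$ and $t$ in $S'$. If some $d \in S'$ precedes the $t$ of the $at$-subpattern, then $dat \subseteq S'$ or $adt \subseteq S'$, which is conclusion~\ref{2sa}. Otherwise every $d$ in $S'$ follows the $at$-subpattern, giving $atd \subseteq S'$; the remaining $d$ of the sandwich then forces $atde \subseteq S'$. If $e \notin P$ this is again conclusion~\ref{2sa}, and if $e$ coincides with a color of $P$, I would combine $atde$ with the $P$-subpattern in $R'$ to exhibit a member of $\Gamma$ (typically from the family $I_{1,n}$ or from $\mathcal{C}_1$), contradicting the standing assumption that $X$ has no subpattern in $\Gamma$.

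In the $ede$ case, I would first try the same reduction as in Case~2 of the proof of Claim~\ref{clm:3stackfinal}: if both $e$'s of the sandwich lie in $S'$, then together with the middle $d$ of $X$ we obtain a $ded$ sandwich blocking $a$, reducing to the previous case. So I may assume the second $e$ lies in $R'$, so $e \in S'$ and $ea \subseteq R'$. I would then locate $e$ in $S'$ relative to $at$, producing one of the orderings $eat$, $aet$, or $ate$ as required by conclusion~\ref{2sc}, and use Claim~\ref{clm:nosandwich} to rule out the case where $e$ is a color already appearing in $atP$ (otherwise a sandwich made entirely of $atP$-colors would block $a$). A short argument on the order of the second $e$ with the $P$-subpattern in $R'$ then yields $eP \subseteq R'$, completing conclusion~\ref{2sc}. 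The residual subcases — where $P$ cannot be extended on the left by $e$ — force a sandwich involving only $a$, $t$, and one or two additional colors to appear entirely inside $R'$, giving one of $taxat$, $atxat$, $txaxt$, or $taxyxt$, which is conclusion~\ref{2sb}.

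The main obstacle is the bookkeeping in the subcases where $e$ coincides with a color of $P$, especially when $P \in \{arbrt, arcbct\}$. For each admissible $e$ and each shape of $P$ one must verify that the induced subpattern of $X$ either is a recognized member of $\Gamma$ (yielding the desired contradiction) or already fits one of the three conclusions. Aligning the stack-side witnesses ($atde$, or $eat$/$aet$/$ate$) with the four possible shapes of $P$ is where the proof is most delicate and accounts for the bulk of the work.
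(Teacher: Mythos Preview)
Your overall architecture matches the paper: split on whether the $d$-sandwich blocking $a$ has the form $ded$ or $ede$, and in the $ede$ case reduce to $e\in S'$ with $ea\subseteq R'$. The $ded$ case is essentially right, though the forbidden patterns you obtain when $e\in P$ are isomorphic to $\mathcal{C}_{2,5}$, not to $I_{1,n}$ or $\mathcal{C}_1$ as you suggest.

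The real gap is in the $ede$ case. You write that Claim~\ref{clm:nosandwich} rules out $e\in atP$, ``otherwise a sandwich made entirely of $atP$-colors would block $a$.'' But the blocking sandwich here is $ede$, and $d$ is \emph{not} a color of $atP$; so Claim~\ref{clm:nosandwich} says nothing directly. In fact the paper treats $e=t$ and $e=r$ as separate, substantial subcases:
\begin{itemize}
\item The subcase $e=t$ is precisely where conclusion~\ref{2sb} arises. One uses $ta\subseteq R'$ together with the fact (from Claim~\ref{clm:nosandwich}) that no $trt$-sandwich blocks $a$ to force a $t$ in $R'$ preceding both an $a$ and the first $r$; a case analysis over the four shapes of $P$ then produces one of $taxat$, $atxat$, $txaxt$, $taxyxt$ inside $R'$.
\item The subcase $e=r$ cannot be dispatched by Claim~\ref{clm:nosandwich} either; instead one shows $atr\subseteq S'$ and that $R$ contains one of $art$, $atr$, $tar$, and then invokes Lemma~\ref{lem:greedy3} to obtain a subpattern in $\Gamma$, a contradiction.
\end{itemize}
Only after $e\notin\{a,t,r\}$ is established does one argue $e\notin P$, and even there the argument is not the one you sketch: when $e\in R$ one first observes $rea\subseteq R'$ (using that $r$ is the first non-$a,t$ color in $R$), which manufactures a \emph{new} sandwich $ere$---this one made of $atP$-colors---blocking $a$, and only then does Claim~\ref{clm:nosandwich} bite.

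Finally, your attribution of conclusion~\ref{2sb} to the ``residual subcases where $P$ cannot be extended on the left by $e$'' is inverted. In the paper, once $e\notin P$, the failure of $eP\subseteq R'$ forces $P=rart$ with $reart\subseteq R'$, and this yields a pattern in $\Gamma$ (hence a contradiction), not conclusion~\ref{2sb}. Conclusion~\ref{2sb} comes exclusively from the $e=t$ branch.
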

\begin{proof}
By assumption, there exists a \(d\)-sandwich
that blocks \(a\) in \(X = {\mS}d{\mR}\).
That is, for some color \(e\), we either have a sandwich of the form
\(ded\) which blocks \(a\) or a sandwich of the form \(ede\) which blocks \(a\).
Therefore, we have the following two cases.

\textbf{Case 1: A sandwich of the form \(ded\) blocks \(a\) in \(X\)}.
Note that since \(d \notin {\mR}\), we have \(de \subseteq {\mS}\).
So, if neither \(dat \subseteq {\mS}\) nor \(adt \subseteq {\mS}\), 
then \(atde \subseteq {\mS}\) and \(e \neq t\).
So to show that \(X\) satisfies \ref{2sa}, we 
only need to show that \(e \notin P\).
First note that \(e \neq a\), since \(ded\) blocks \(a\).
If \(e = r\), then, since Claim~\ref{clm:PinR} implies that \(art \subseteq P\),
we have 
\(X \supseteq {\mS}dP \supseteq atdrdart \cong abdtdatb \cong C_{2,5}\), a contradiction.
We get a similar contradiction if \(P = arbrt\) or \(P = arcbct\) and \(e = b\), since in
either case \(abt \subseteq P\), so 
\({\mS}dP \supseteq atdbdabt \cong abdtdatb \cong C_{2,5}\), a contradiction.
If \(P = arcbct\) and \(e = c\), then \(act \subseteq P\),
so \({\mS}dP \supseteq atdcdact \cong abdtdatb \cong C_{2,5}\), a contradiction.
With Claim~\ref{clm:PinR}, this completes the proof of this case.

\textbf{Case 2: A sandwich of the form \(ede\) blocks \(a\) in \(X\)}.
Since \(d \notin R\), we have \(e \in {\mS}\). 
Furthermore, if \(de \subseteq {\mS}\), then a sandwich of the form \(ded\) blocks
\(a\) in \(X\), and this was handled in Case 1.
Therefore, we can assume that \(ea \subseteq {\mR}\).

\textbf{Case 2.1: \(e = t\)}.
In this case, \(ta = ea \subseteq {\mR}\) and, by Claim~\ref{clm:nosandwich} a sandwich
of the form \(trt\) does not block \(a\) in \(X = {\mS}d{\mR}\). 
The fact that \(at \subseteq {\mS}\) then implies that  \(rta \nsubseteq {\mR}\).
Therefore, there exists a \(t\) in \({\mR}\) that is before both an \(a\) 
in \({\mR}\) and the first \(r\) in \({\mR}\).
So,
if \(P = arat\), then we have either \(tarat = eP\) or \(atrat\) in \({\mR}\), so \ref{2sb} holds.
Furthermore, if \(P = rart\), we have \(trart = eP\) in \({\mR}\), so \ref{2sb} holds.
If \(P = arbrt\), then one option is \(tarbrt = eP \subseteq {\mR}\), and \ref{2sb} holds in this case.
By Claim~\ref{clm:nosandwich}, there is no \(rbr\) sandwich that blocks \(a\),
so the only other possibilities when \(P = arbrt\) are 
\(atrabrt \subseteq {\mR}\) or 
\(atrbart \subseteq {\mR}\), and, in either case, 
we have \(trart \subseteq {\mR}\), so \ref{2sb} holds.
Finally, assume \(P = arcbct\). If \( tarcbct = eP \subseteq {\mR}\), 
then \(tacbct \subseteq {\mR}\), so \ref{2sb} holds.
Otherwise, because there is no \(cbc\) sandwich that blocks \(a\), 
at least one of \(atracbct \supseteq tacbct \), \(atrcabct \supseteq tcact\), or \(atrcbact \supseteq tcact\) 
is a subordering of \({\mR}\), so \ref{2sb} holds.

\textbf{Case 2.2: \(e = r\)}.
In this case, \(r \in {\mS}\) and \(ra = ea \subseteq {\mR}\).
Recall that, by Claim~\ref{clm:nosandwich}, there is no sandwich of the
form \(rtr\) that blocks \(a\) in \(X\).  Therefore, \(r\) must follow \(at\) in \({\mS}\),
so \(atr \subseteq {\mS}\).  Because \(art \subseteq P\) and \(a\) is blocked by a \(d\)-sandwich, Lemma~\ref{lem:greedy3} (with \(r\) playing the role of \(b\)) implies that a subpattern of \(X\) is in \(\Gamma\), a contradiction.

\textbf{Case 2.3: \(e \neq r\) and \(e \neq t\)}.
Recall that we have \(e \in {\mS}\) and \(ea \subseteq {\mR}\).
Since \(e \in {\mS}\) and \(at \subseteq {\mS}\), 
one of \(eat\), \(aet\), or \(ate\) is in \({\mS}\).
We will first prove that \(e \notin P\) and \(e\) is not blocked by a sandwich
consisting only of colors in \(P\) in \(X\).

Assume first that \(e \in R\) and recall that 
\(r\) was defined to be the first color in \(R\) that is neither \(t\) nor \(a\).
Therefore, \(rea \subseteq {\mR}\), which implies that a sandwich of the form \(ere\)
blocks \(a\) in \(X \supseteq {\mS}d{\mR}\).
Note that this, with Claim~\ref{clm:nosandwich}, implies that \(e \notin P\).
Recall that Claim~\ref{clm:nosandwich} implies that every color in \(atP\) is unsortable
except \(a\). 
If \(e\) is blocked by a sandwich consisting of colors only in \(P\), then 
the subordering formed by removing all colors except
those in \(P\) and \(e\) from \(X - d\) is trivially unsortable.
This contradicts the assumption that all proper suborderings of \(X\) are sortable.

Now assume that \(e \notin R\). In this case, \(P \subseteq R\) implies that \(e \notin P\).
By the definition of \({\mR}\), the fact that \(e \in {\mR}\) implies that the last \(e\)
follows the last \(d\) in \(X\).
Since \(e \notin R\) and the state after \(d\) is sorted is \((S, R)\),
the color \(e\) was sorted before the color \(d\).
Since \(d\) was sorted before any color in \(P\),
the color \(e\) is not blocked by a sandwich consisting only of colors in \(P\) in \(X\).

So, in both cases, \(e \notin P\) 
and \(e\) is not blocked by a sandwich consisting only of colors in \(P\) in \(X\).
In particular, the subordering \(ata\) does not block \(e\) in \(X\).
This means that \(e\) does not follow \(a\) in \({\mR}\).
Note first that if \(eP \subseteq {\mR}\), then \ref{2sc} holds.
So we can assume \(eP \nsubseteq {\mR}\).
This, with the fact that \(e\) does does not follow \(a\) in \({\mR}\) and \(e \in {\mR}\), 
implies that \(P = rart\) and \(reart \subseteq {\mR}\).
Therefore, at least one of \(eatdreart\), \(aetdreart\), or \(atedreart\) is a subordering of \(X = {\mS}d{\mR}\).
This is a contradiction, because 
\(eatdreart \supseteq eatrear \cong dabtdat \cong I_{1,0}\),
\(aetdreart \supseteq aetreart \cong adbtdatb \cong \mathcal{C}_{2,4}\),
and
\(atedreart \supseteq atereart \cong abdtdatb \cong \mathcal{C}_{2,5}\).
\end{proof}

When Claim~\ref{clm:2stackfinal}\ref{2sa} holds, 
one of the orderings in the following table is a subordering of \(X\).
\begin{center}
    \begin{tabular}{|c||c||c||c|}
    \hline
\(datdarat \cong \mathcal{C}_{1,1}\)  & 
\(datdrart \cong \mathcal{C}_{1,7}\) &
\(datdarbrt \cong \mathcal{C}_{1,12}\) &
\(datdarcbct \subpatternr \mathcal{C}_{1,12}\) \\\hline

\(adtdarat \cong \mathcal{C}_{1,4}\) &
\(adtdrart \cong \mathcal{C}_{1,8}\) &
\(adtdarbrt \cong \mathcal{C}_{1,11}\) &
\(adtdarcbct \subpatternr \mathcal{C}_{1,11}\) \\\hline

\(atdedarat \cong \mathcal{C}_{1,5}\) &
\(atdedrart \cong \mathcal{C}_{1,9}\) &
\(atdedarbrt \cong \mathcal{C}_{1,13}\) & 
\(atdedarcbct \subpatternr \mathcal{C}_{1,13}\) \\\hline
\end{tabular}
\end{center}

If Claim~\ref{clm:2stackfinal}\ref{2sc} holds, then by swapping \(d\) and
\(e\), we can see that one of orderings in the preceding table is a subpattern of \(X\).
Finally, if Claim~\ref{clm:2stackfinal}\ref{2sb} holds, then 
one of the orderings in the following table is a subpattern of \(X\).
\begin{center}
    \begin{tabular}{|c||c||c||c|}
    \hline
\(atdtarat \cong \mathcal{C}_{1,2}\) &
\(atdatrat \cong \mathcal{C}_{1,3}\) &
\(atdtrart \cong \mathcal{C}_{1,6}\) &
\(atdtarbrt \cong \mathcal{C}_{1,10}\) \\\hline
\end{tabular}
\end{center}
\end{proof}

\section{Further Directions}

While the process of foot-sorting lends itself naturally to sorting with a stack, one could consider analogs of the sock sorting problem which utilize other stack configurations or other data structures entirely.

\subsection{Sorting with Multiple Stacks}

Defant and Kravitz introduced a notion of foot-sorting with multiple feet in \cite{defant2022footsorting}, though with more stacks in series, the sorting process becomes increasingly computationally intensive. It would be interesting to investigate perhaps the simplest extension of our work, the basis of sock orderings which are foot-sortable with two stacks in series. 

Defant and Kravitz proved in \cite{defant2022footsorting} that for any number \(n\geq 1\), one may construct a sock ordering which is unsortable with \(n\) stacks. Using this proof, one can check that the sock ordering of the form \((abcde)^{1440}\) (that is, the block \(abcde\), where \(a,b,c,d\) and \(e\) are distinct sock colors, repeated 1440 times) is unsortable with two stacks in series, but it might be challenging to construct a single sock ordering that is minimally unsortable in this setting.

\subsection{Deque Sorting}

Because a deque, or double-ended queue, allows for more access to the socks for addition and removal, one might think about deque-sorting for socks independently of a foot; a sock could be placed either over an outermost sock or within an innermost sock, and the same is possible for sock removal.\footnote{
Using a deque in the context of permutation sorting has been considered previously 
(e.g.\ see \cite{knuth97}, \cite{denton2012methods}, and \cite{price2017permutations}).} This modifies the original operations of foot-sorting, using the language of the deque, to be

\begin{enumerate}[left=40pt]
  \item[(\(\text{stack}_\uparrow\))] Take the leftmost sock on the right and place it on top of the deque (potentially over any socks that are already in the deque).
  \item[(\(\text{stack}_\downarrow\))] Take the leftmost sock on the right and place it on the bottom of the deque (potentially under any socks that are already in the deque).
  \item[(\(\text{unstack}_\uparrow\))] Remove the topmost sock from the deque and place it so that it becomes the rightmost sock on the left.
  \item[(\(\text{unstack}_\downarrow\))] Remove the bottommost sock from the deque and place it so that it becomes the rightmost sock on the left.
\end{enumerate}

While the addition of possible moves in this scenario makes for more difficult computation, some similar ideas hold as in the stack setting: in particular, it is now necessary to avoid the pattern \(abab\) from appearing in the deque, just as it is necessary to avoid \(aba\) appearing in the stack. It would be interesting to enumerate the basis of deque-sortable sock orderings, just as we have enumerated the basis of foot-sortable sock orderings.

It is straightforward to show that the basis of deque-sortable sock orderings is different from \(\Gamma\), as well as to show that this basis is infinite. Very closely mirroring the proof of Observation~\ref{obs:inf_unsort} under the relaxed restrictions of the deque, one can generate the class of sock orderings of the form \(dabtI_{1,n}\) for any \(n\geq 0\) and verify that such sock orderings are minimally unsortable with the deque. However, with the increased computational difficulty in the deque setting, we do not know whether other minimal deque-unsortable sock orderings may be generated from \(\Gamma\) in a similar fashion. 

\section{Acknowledgement}

We would like to thank  the anonymous referees for their careful reading of the manuscript
and for their valuable comments.

\appendix
\section{Algorithm}\label{sec:algorithm}

In this appendix, we present a recursive algorithm for foot-sorting that corresponds to our inductive proof 
of Theorem~\ref{thm:main}.
We present this algorithm only for expository purposes, 
and we made no attempt to optimize the running time of this algorithm.
(Note that, as referenced in the introduction, 
independent of our work, Yu has already described a fast algorithm \cite{yu2023deciding}.)

The algorithm first attempts to sort using the greedy algorithm.
\smallskip\hrule\smallskip
\noindent\textbf{Greedy algorithm} - \textit{Input}: A sock ordering \(X\).
\begin{enumerate}[label=(G.\arabic*)]
  \item While the current state is not terminal, do the following.
    \begin{enumerate}[label=(G.\arabic{enumi}.\arabic*)]
      \item Sort the sortable color which terminates first.
        That is, if there is a color that only appears on the top of the stack, then sort that color,
        and otherwise sort the sortable color on the right which terminates first. 
    \end{enumerate}
\end{enumerate}
\hrule\bigskip

The main algorithm also makes use of the following helper routine.
The input to the helper routine is a sock ordering \(X\) and colors \(d\) and \(a\)
where we assume the input satisfies the hypothesis of Lemma~\ref{lem:nodsandwich} 
in the following sense:
There is a way to sort \(X\) to a state \((S, R)\) so that
\begin{itemize}
  \item \(d\) was the color sorted immediately before entering the state \((S, R)\);
  \item for some substack \(S' \subseteq S\) and subordering \(R' \subseteq R\),
    the color \(a\) is beneath the top color on \(S'\) and
    the color \(a\) is the only good sortable color in \(S'R'\). (In particular,
    this implies that \((S', R')\), and hence \((S,R)\), is unsortable.)
\end{itemize}
When this routine is called, the colors \(a\) and \(d\) will also have the property that 
if \(a\) is blocked by a \(d\)-sandwich in \(X\)
(i.e.\ Lemma~\ref{lem:nodsandwich}\ref{I} holds), then 
(by either Lemma~\ref{lem:greedy3} or Lemma~\ref{lem:greedy2}) 
a subpattern of \(X\) is in \(\Gamma\), so \(X\) is unsortable.

\smallskip\hrule\smallskip
\noindent\textbf{Helper routine} - \textit{Input}: A sock ordering \(X\) and colors \(d\) and \(a\).
\begin{enumerate}[label=(H.\arabic*)]
  \item If \(a\) is blocked by a \(d\)-sandwich, then \(X\) is unsortable.
  \item Otherwise, we recursively run the main algorithm on the smaller ordering \(X - d\).
  \item If it fails, then by our recursive assumption \(X - d\) is unsortable, so \(X\) is unsortable.
  \item Otherwise, \(X - d\) is sortable and we have the following two possibilities.
    \begin{enumerate}[label=(H.\arabic{enumi}.\arabic*)]
      \item When we attempt to sort \(X\) using the same sequence of colors that fully 
        sorted \(X - d\),
        there is a stage in which the color \(d\) only appears as the top color of the stack, which
        means that we can fully sort \(X\) by sorting \(d\) at this stage and 
        then continuing with the sequence that was used to sort \(X - d\).
      \item \(X\) is unsortable because \(I_{1,n} \in \Gamma\) is a subpattern of \(X\).
    \end{enumerate}
\end{enumerate}
\hrule\bigskip

We now present the main algorithm.
It closely follows the proof Theorem~\ref{thm:main}.

\smallskip\hrule\smallskip
\noindent\textbf{Main Algorithm} - \textit{Input}: A sock ordering \(X\).
\begin{enumerate}[label=(M.\arabic*)]
  \item Let \((S,R)\) be the output of the greedy algorithm performed on \(X\).
  \item If the greedy algorithm succeeds (i.e.\ \(S = R = \emptyset\)), then we are done.
  \item If \(S = \emptyset\) or there is a color \(x \in S\) on the stack such that \((x, R)\) is terminal,
    then \(X\) has a trivially unsortable subpattern, so \(X\) is unsortable.
  \item Otherwise, 
    there are at least two colors on the stack, so let \(t\) be the top
    color and let \(b\) be the color immediately beneath it on the stack.
    We can also let \(d\) be the final color sorted by the greedy algorithm.
  \item If \((bt, R)\) is terminal, then we can finish by running the helper routine with input
    \(X\), \(d\), and \(b\) (c.f.\ Lemma~\ref{lem:greedy2} and Case 2 in Theorem~\ref{thm:main}).
  \item Otherwise, let \(a\) be the lowest color on the stack
    such that \((abt, R)\) is terminal. (Such a color exists by Lemma~\ref{lem:greedy4}.)
    We must have \( at \subseteq R \). 
    (This follows from Claim~\ref{clm:asinR} in the proof of Theorem~\ref{thm:main}.)
  \item If there exists a color \(z\) that is both in \(R\) and 
    above \(a\) and below or equal to \(b\) on the stack \(S\), then do the following (c.f.\ Lemma~\ref{lem:greedy3} and Case 3.1 in Theorem~\ref{thm:main}).
    \begin{enumerate}[label=(M.\arabic{enumi}.\arabic*)]
      \item If \(azt \subseteq R\) or \(atz \subseteq R\), then we can finish by running the helper routine
        with input \(X\), \(d\), and \(a\). 
      \item Otherwise, \(zat \subseteq R\), and we can finish by running the helper routine with 
        input \(X\), \(d\), and \(z\).
    \end{enumerate}
  \item Otherwise, we have \(b \notin R\), so if we backtrack to
    the state \((S', R')\) immediately before the final \(b\) was pushed onto the stack
    the following holds:
    \begin{itemize}
      \item There exists a string \(C\) with \(b \notin C\) and strings \(A\) and \(A'\) such that
        \(X = AbC\) and \(R' = A'bC\) and \(S'A' \subseteq A\).
      \item The next color sorted by the greedy algorithm (i.e.\ the color that
        pushes the final \(b\) onto the stack) is a color \(c \in C\). 
      \item For some nonnegative integer \(n\), there are \(n+1\) distinct colors
        \(t_0, \dotsc, t_n\) that are distinct from \(b\), \(c\) and \(a\),
        such that \(t_0cI(t_0, \dotsc, t_n)at_n \subseteq C\).
        (This follows from Lemma~\ref{lem:interlinking}.)
    \end{itemize}
  \item Therefore, if \(ca \subseteq A\), then \(I_{1,n} \in \Gamma\) is a subpattern of \(X\),
    so \(X\) is unsortable.
  \item\label{alg:Sprime1} Otherwise, there are at least three distinct colors on the stack \(S'\).
    (This follows from Claim~\ref{clm:Sprime} in the proof of Theorem~\ref{thm:main}.)
  \item In particular, this implies that there is a color \(d'\) that was sorted immediately 
    before the greedy algorithm reached the state \((S', R')\).
  \item\label{alg:Sprime2} Furthermore, if we let \(y\) be the color immediately beneath the top color on the 
    stack \(S'\), we can finish by running the helper routine with input \(X\), \(d'\), and \(y\)
    (c.f.\ Lemma~\ref{lem:greedy3} and Claim~\ref{clm:Sprime} 
    in the proof of Theorem~\ref{thm:main}).
\end{enumerate}
\end{document}